\DeclareMathOperator{\Bin}{Bin}
\newcommand{\E}{\Bbb{E}}
\renewcommand{\P}{\Bbb{P}}
\newcommand{\EE}{\mathcal{E}}
\newcommand{\ep}{\epsilon}
\def\rddots#1{\cdot^{\cdot^{\cdot^{#1}}}}
\newtheorem*{rep@theorem}{\rep@title}
\newcommand{\newreptheorem}[2]{%
\newenvironment{rep#1}[1]{%
 \def\rep@title{#2 \ref{##1}}%
 \begin{rep@theorem}}%
 {\end{rep@theorem}}}
\numberwithin{equation}{section}
\newtheorem{thm}{Theorem}
\newtheorem{result}{Result}[section]
\newtheorem{lem}[result]{Lemma}
\newtheorem{prp}[result]{Proposition}
\newtheorem{clm}[result]{Claim}
\theoremstyle{definition}
\newtheorem{rmk}[result]{Remark}
\newtheorem*{defn}{Definition}
\theoremstyle{remark}
\newcommand{\hide}[1]{}
\newcommand{\edit}[1]{}%{\color{red}{#1}}}
\newcommand{\rough}[1]{}%\textbf{\textcolor{blue}{#1}}}
\definecolor{darkgreen}{RGB}{75,150,75}
\newcommand{\review}[1]{}%\textcolor{darkgreen}{#1}}
\newcommand{\hides}[1]{}%1}
\newcommand{\pub}[1]{}%\textcolor{purple}{#1}}
\title{\vspace{-0.9cm}Induced Ramsey problems for trees and graphs with bounded treewidth}
\author{Zach Hunter\thanks{ETH Zurich, e-mail: \textbf{\{zach.hunter, benjamin.sudakov\}@math.ethz.ch}. Research supported in part by SNSF grant 200021-228014.}, Benny Sudakov$^*$}
\date{}
\begin{document}
\maketitle
\begin{abstract}
    The induced $q$-color size-Ramsey number $\hat{r}_{\text{ind}}(H;q)$ of a graph $H$ is the minimal number of edges a host graph $G$ can have so that every $q$-edge-coloring of $G$ contains a monochromatic copy of $H$ which is an induced subgraph of $G$. A natural question, which in the non-induced case has a very long history, asks which families of graphs $H$ have induced Ramsey numbers that are linear in $|H|$. We prove that for every $k,w,q$, if $H$ is an $n$-vertex graph with maximum degree $k$ and treewidth at most $w$, then $\hat{r}_{\text{ind}}(H;q) = O_{k,w,q}(n)$. 
    This extends several old and recent results in Ramsey theory. Our proof is quite simple and relies upon a novel reduction argument.
\end{abstract}

\section{Introduction}

A celebrated theorem of Ramsey says that for every graph $H$, and every integer $q$, there exists $N$ such that every $q$-edge-coloring of the complete graph $K_N$, contains a monochromatic copy of $H$. We write $r(H;q)$ to denote the smallest integer $N$ with this property, this is the $q$-color Ramsey number of $H$. When $q=2$, we often suppress this, writing $r(H)$ instead of $r(H;2)$. Determining or estimating Ramsey numbers is a central question in Combinatorics, which was extensively studied in the last seventy years. We refer the reader to the book \cite{GRS} or the survey \cite{CFS} for further literature.

More generally, one can consider a host graph $G$ different from the complete graph and say that $G \to (H)_q$ if for every $q$-coloring of the edges of $G$, we can find a monochromatic copy of $H$. The Ramsey numbers ask for the minimal number of vertices a host graph $G$ can have, in which case $G$ should clearly be complete. But one can also consider having ``sparser'' host graphs $G$, which minimize other parameters. This led Erd\H{o}s, Faudree, Rousseau, and Schelp in 1978 to the natural question of the minimum number of edges \cite{EFRS} that a Ramsey graph can have. They define $\hat{r}(H;q):= \min\{|E(G)|: G\to (H)_q\}$ to be the $q$-color size-Ramsey number (of $H$). 

One of the extensively studied questions, which was asked by Erd\H{o}s \cite{erdos}, is to understand which classes of graphs have size-Ramsey numbers that are linear in their number of edges. A classic result of Beck \cite{beck} is that the $n$-vertex path $P_n$ has linear size-Ramsey numbers. Later on, Friedman and Pippenger proved that $n$-vertex trees $T$ with maximum degree $k$ also satisfies $\hat{r}(T;q) = O_{q,k}(n)$ \cite{FP}. Building upon this, Haxell and Kohayakawa proved the bound $\hat{r}(T;q) = O_q(k n)$ \cite{HK}. This is tight in general, since as observed by Beck \cite{beck2} it is easy to construct trees $T$ where $\hat{r}(T)\ge \Omega(k n)$.

Paths and trees are very special cases of graphs with bounded treewidth. Recall that a graph is $G$ is \textit{chordal} if every induced cycle is a triangle, and that $H$ has treewidth at most $w$ if $H\subset G$ for some chordal graph $G$ without a clique of size $w+2$. It was recently proved by Berger, Kohayakawa, Maeseka, Martins, Mendon\c{c}a, Mota, and Parczyk that if $H$ is an $n$-vertex graph with maximum degree at most $k$ and treewidth at most $w$, then $\hat{r}(H;q) =O_{q,k,w}(n)$ \cite{BKMMMMP} (see also \cite{KLWY}, which handles the case of $q=2$). Furthermore Dragani\'c, Kaufmann, Munh\'a Correia, Petrova, and Steiner \cite{DKMPS}
has obtained the nearly linear bound $\hat{r}(H)\le O_k(nw\log n)$ on the size-Ramsey numbers of $H$ with maximum degree $k$ and treewidth $w$ growing with $n$.

In this paper, we will focus on the induced analogues of the above quantities. We say $G\to_{\text{ind}} (H)_q$ if for every $q$-coloring of $E(G)$, there is some monochromatic copy of $H$ which is an induced subgraph of $G$. We define $r_{\text{ind}}(H;q)$ and $\hat{r}_{\text{ind}}(H;q)$ in the same way as the non-induced versions, being the minimum number of vertices/edges the host graph $G$ must have to satisfy $G\to_{\text{ind}} (H)_q$. The existence of these numbers is non-trivial and is an important extension of Ramsey's theorem. It was originally proven independently by Deuber \cite{deuber}, Erd\H{o}s, Hajnal, and Posa \cite{EHP}, and R\"{o}dl \cite{rodl}. Naturally, it is much harder to get upper bounds for these induced variants. For example, a short and simple argument shows that $r(H)\le 4^n$ for any $n$-vertex graph $H$. On the other hand, the best upper bound in the induced setting is only $r_{\text{ind}}(H)\le n^{O(n)}$ (see \cite{CFS1}), and it is a famous open question of Erd\H{o}s whether there is an exponential bound $2^{O(n)}$ for this problem. 

Another natural question (see e.g., \cite{BDS}), that had a gap between bounds for induced and non-induced settings, was bounded degree trees.
Although, as we mentioned above, it was well known that the size-Ramsey number of such trees are linear, the induced counterpart was not known.
Very recently, Gir\~ao and Hurley \cite{GH} proved that for any tree $T$ with maximum degree $k$, $\hat{r}_{\text{ind}}(T;q) = O_q(k^2n)$. This is a very nice result that in particular shows how to find induced trees in sparse expander graphs. The main contribution of this note is a shorter, alternative proof of a more general result (albeit with worse quantitative bounds).

\begin{thm}\label{thm:bonus} Let $H$ be an $n$-vertex graph with maximum degree $k$ and treewidth at most $w$. Then 
    \[\hat{r}_{\text{ind}}(H;q)  = O_{k,w,q}(n).\]
\end{thm}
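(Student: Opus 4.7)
The plan is to reduce the induced statement to the non-induced theorem of Berger et al.\ \cite{BKMMMMP} applied to a small vertex blow-up of $H$, and then clean up via Haxell's independent transversal theorem. Fix a constant $s=s(k,w,q)$ to be specified below, and define the $s$-blow-up $H_s$ by $V(H_s):=V(H)\times[s]$ and $(u,i)(v,j)\in E(H_s)$ iff $uv\in E(H)$. Replacing each bag $B_t$ of a width-$w$ tree decomposition of $H$ by $B_t\times[s]$ exhibits a tree decomposition of $H_s$ with bags of size at most $s(w+1)$, so $\mathrm{tw}(H_s)\le s(w+1)-1$; also $\Delta(H_s)\le sk$ and $|V(H_s)|=sn$. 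Applying \cite{BKMMMMP} to $H_s$ gives a host graph $G$ with $|E(G)|=O_{k,w,q}(n)$ and $G\to(H_s)_q$ in the non-induced sense. I will additionally require $\Delta(G)\le D=D(k,w,q)$; this is standard for sparse-random or explicit expander Ramsey hosts, and should be visible from the construction underlying \cite{BKMMMMP}.

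Now fix any $q$-edge-coloring of $G$. By the arrowing, there is a monochromatic (say color $i$) copy $\phi\colon V(H_s)\hookrightarrow V(G)$. For each $v\in V(H)$ set $U_v:=\phi(\{v\}\times[s])$; these are disjoint $s$-subsets of $V(G)$ such that for every edge $vv'\in E(H)$ the entire $K_{s,s}$ between $U_v$ and $U_{v'}$ is present in $G$ in color $i$. The only obstruction to extracting an induced monochromatic copy of $H$ is that, for some non-edge $vv'\notin E(H)$, a pair between $U_v$ and $U_{v'}$ might still be a $G$-edge (of any color). Let $A$ be the graph on $\bigcup_v U_v$ consisting of exactly these ``bad'' edges. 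Since $E(A)\subseteq E(G)$ we have $\Delta(A)\le \Delta(G)\le D$. Taking $s=2D$, each $|U_v|\ge 2\Delta(A)$, so Haxell's independent transversal theorem supplies $x_v\in U_v$ for every $v\in V(H)$ such that $\{x_v\}_{v\in V(H)}$ is independent in $A$. Then $v\mapsto x_v$ is an induced monochromatic (color $i$) copy of $H$ in $G$, giving $\hat{r}_{\text{ind}}(H;q)\le|E(G)|=O_{k,w,q}(n)$.

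The main obstacle will be arranging that the non-induced host $G$ from \cite{BKMMMMP} can be taken with bounded maximum degree $D=O_{k,w,q}(1)$ (in particular, independent of $s$). This should follow from the standard sparse-random or bounded-degree expander constructions underlying such Ramsey results, but may require a careful inspection of their proof or a modest separate argument to produce a bounded-degree host. If this bound turns out to be delicate, one can replace Haxell by a Lov\'asz Local Lemma argument in the transversal step, exploiting that the total number of bad edges is already globally constrained by $|E(A)|\le|E(G)|=O_{k,w,q}(n)$.
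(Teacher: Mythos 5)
There is a genuine circularity in your argument that makes it fail. You take $s = 2D$ where $D$ is required to bound $\Delta(G)$, with $G$ a non-induced Ramsey host for $H_s$. But any graph $G$ with $G\to(H_s)_q$ must in particular contain $H_s$ as a subgraph, and $\Delta(H_s) = s\cdot\Delta(H) = sk$ (each $(v,j)$ is joined to all $s$ copies of each $H$-neighbor of $v$). So $\Delta(G)\ge sk$, forcing $D\ge sk$. Combined with your requirement $s\ge 2D$ (needed so that $|U_v|\ge 2\Delta(A)$ for Haxell), this gives $s\ge 2sk$, which is impossible for $k\ge 1$. The parameter $D$ you want to be ``independent of $s$'' cannot be: it is at least $sk$ by construction. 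The Lov\'asz Local Lemma fallback you suggest at the end runs into the same wall, since the bad-event probabilities and the dependency degree are both governed by $\Delta(A)\le\Delta(G)\ge sk$; the global bound $|E(A)|\le|E(G)|=O(n)$ does not control the local quantity you actually need.

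The paper's proof inverts the blow-up: rather than inflating the \emph{target} $H$ and then trying to find an induced copy of $H$ inside a non-induced copy of $H_s$, it starts from a bounded-degree non-induced Ramsey host $G$ for $H$ itself (so $\Delta(G)=O_{k,w,q}(1)$ genuinely independent of the later blow-up parameter), and inflates the \emph{host} to a pseudorandom $s$-blowup $G'$. Given a coloring of $G'$, it first runs a regularity-type ``cleaning'' step (Proposition~\ref{regularity cleaning}) to pass to subsets $X_v^*\subset X_v$ and an auxiliary coloring of $E(G)$, then applies $G\to(H)_q$ to locate a monochromatic copy of $H$ in $G$, and finally runs a greedy embedding (Proposition~\ref{general greedy embedding}) that uses lower-regularity for the wanted edges and the bounded degree of $G$ together with $(L,p)$-regularity for the wanted non-edges. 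Since $\Delta(G)$ depends only on $k,w,q$ and not on $s$, the choice of $s$ can safely depend on $\Delta(G)$, and no circularity arises. If you want to rescue your line of thinking, the essential move you are missing is exactly this inversion: blow up the host, not the target.
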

\noindent In particular, this theorem extends a celebrated result of Haxell, Kohayakawa, and Luczak \cite{HKL} that the induced size-Ramsey numbers of cycles are linear (since cycles have treewidth $2$).

While \cite{GH} works by extending Friedman-Pippenger-type techniques to the induced setting, our approach is quite different. We use a novel reduction (Theorem~\ref{thm:reduction general}, discussed in the next subsection) which converts constructions for the size-Ramsey numbers $\hat{r}(H;q)$ into constructions for $r_{\text{ind}}(H;q)$ and $\hat{r}_{\text{ind}}(H;q)$. Then, we obtain Theorem~\ref{thm:bonus} by applying the result of \cite{BKMMMMP} as a blackbox, which implicitly builds on the standard \textit{non-induced} Friedman-Pippenger embedding method.

Due to the usage of (weak) regularity-based arguments, the absolute constants from Theorem~\ref{thm:bonus} are quite large. Therefore for trees, we shall also present a more efficient version of our argument, which we believe may be of independent interest. We first need a key definition, that will be used throughout the paper.
\begin{defn}
    Given a graph $G$, we say $G'$ is an \textit{$s$-blowup} of $G$, if there is a homomorphism $\phi:V(G')\to V(G)$ so that $uv\in E(G')$ implies $\phi(u)\phi(v)\in E(G)$ and furthermore $|\phi^{-1}(v)| \le s$ for each vertex $v\in V(G)$. In particular, we do not have edges inside the subsets $\phi^{-1}(v)$.
\end{defn}

\begin{thm}\label{thm:main}
    Let $T$ be an $n$-vertex tree with maximum degree $k$. Then 
    \[r_{\text{ind}}(T;q)\le 
    \hat{r}_{\text{ind}}(T;q)\le (kq)^{Cq^4k^3}n\]for some absolute constant $C$. Furthermore, if $H$ is a $w$-blowup of $T$, then 
    \[\hat{r}_{\text{ind}}(H;q)\le (kq)^{Cq^4k^3 w}n.\] 
\end{thm}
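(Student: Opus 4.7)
The plan is to combine a non-induced tree-Ramsey construction with a pseudorandom blowup, and then lift a non-induced monochromatic embedding of $T$ in the base graph to an induced monochromatic embedding of $H$ in the blowup. By the Haxell--Kohayakawa theorem, there is a graph $G_0$ on $O_q(kn)$ edges with bounded maximum degree $D = D(k,q)$ satisfying $G_0 \to (T)_q$. I would take $G$ to be an $M$-blowup of $G_0$, where each vertex $v$ is replaced by an independent set $V_v$ of size $M = (kq)^{Cq^4 k^3 w}$, and between each pair $V_u, V_v$ with $uv \in E(G_0)$ we place a random bipartite graph of density $\tfrac{1}{2q}$ (or any sufficiently pseudorandom bipartite graph). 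Then $|E(G)| \le M^2 |E(G_0)| \le (kq)^{Cq^4 k^3 w} n$, matching the target bound.

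Given a $q$-edge-coloring $\chi$ of $G$, I would define an auxiliary coloring $\chi_0 : E(G_0) \to [q]$ by letting $\chi_0(uv)$ be a majority color in $G[V_u, V_v]$; by pigeonhole this majority color has density at least $\tfrac{1}{2q^2}$ there. Applying $G_0 \to (T)_q$ to $\chi_0$ yields a non-induced monochromatic embedding $\psi : T \to G_0$ of some color $c^*$. The heart of the argument is to lift $\psi$ to an induced embedding $f : H \to G$ with all edges in color $c^*$. Processing $V(T)$ in depth-first order from an arbitrary root, at each step $u \in V(T)$ we would choose a set $f(X_u) \subset V_{\psi(u)}$ of size $|X_u| \le w$ satisfying: (i) every pair between $f(X_u)$ and $f(X_p)$ is an edge of color $c^*$, where $p$ is the parent of $u$; (ii) there are no $G$-edges between $f(X_u)$ and $f(X_j)$ for any already-processed $j$ with $\psi(u)\psi(j) \in E(G_0)$ but $uj \notin E(T)$. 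Since $\Delta(G_0) \le D$, the number of active constraints per step is at most $D$.

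The main obstacle will be verifying that such $f(X_u)$ always exist for $M = (kq)^{O(q^4 k^3 w)}$. Constraint (ii) should be the easier one: non-edges are abundant (density at least $\tfrac{1}{2}$), so each such constraint eliminates only a constant fraction of candidates. Constraint (i) is the crux: it demands a $K_{w,w}$ in color $c^*$ between $V_{\psi(u)}$ and $V_{\psi(p)}$ with $f(X_p)$ already fixed. Using either Chernoff concentration on the random blowup or an appropriate pseudorandomness property of the bipartite graphs, the common color-$c^*$ neighborhood of $f(X_p)$ in $V_{\psi(u)}$ has size at least $M/(2q^2)^w$. To ensure enough candidates survive for all descendants of $u$, I would maintain a Friedman--Pippenger-style reserve in the candidate set along the DFS traversal, with shrinkage at each step bounded by $(2q^2)^{O(w)}$. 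Tracking the constants through the recursion, with $D = \mathrm{poly}(k,q)$, will give $M = (kq)^{O(q^4 k^3 w)}$. The tree bound is recovered by specializing to $w = 1$.
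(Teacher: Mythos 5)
Your high-level framework — blow up a bounded-degree Haxell--Kohayakawa host graph, read off an auxiliary coloring of the base graph, find a monochromatic $T$ there, then lift to an induced copy in the blowup — is the same skeleton the paper uses. However, there is a real gap at the step you flag as ``the crux.''

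Your claim that ``using either Chernoff concentration on the random blowup or an appropriate pseudorandomness property of the bipartite graphs, the common color-$c^*$ neighborhood of $f(X_p)$ in $V_{\psi(u)}$ has size at least $M/(2q^2)^w$'' is not justified. The Chernoff/pseudorandomness guarantees apply to the \emph{uncolored} bipartite gadget $G[V_u,V_p]$; the coloring $\chi$ is chosen adversarially \emph{after} the host graph is fixed. An adversary can give $G[V_u,V_p]$ a coloring where $c^*$ is the majority color (density $\geq 1/2q^2$), yet all $c^*$-edges are concentrated on a tiny subset $S \subset V_p$ — for instance $|S| = M/2q^2$ with every $S$--$V_u$ edge colored $c^*$ and no other $c^*$-edge. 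Then the color-$c^*$ degree of every $x \in V_p \setminus S$ into $V_u$ is $0$, so if $f(X_p)$ was chosen outside $S$ (which your earlier steps do not rule out), there are no candidates at all, and a Friedman--Pippenger reserve cannot recover from an empty set. The majority-color definition of $\chi_0$ gives you a density lower bound but no control over where the color mass sits, and density alone does not produce large common neighborhoods of $w$ prescribed vertices.

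This is precisely what the paper's ``cleaning'' step supplies and what your proposal omits. Before invoking $G_0 \to (T)_q$, one must first pass to subsets $Y_b^* \subset V_b$ and choose the auxiliary colors so that the chosen color class between any adjacent pair has the needed structure \emph{restricted to these shrunken sets}: in the tree-optimized argument this is Proposition~\ref{DRC cleaning}, which uses a min-degree pass (Lemma~\ref{get mindegree}) followed by a simultaneous dependent-random-choice pass (Proposition~\ref{simultaneous DRC}) to guarantee that any $wk$ vertices of $\bigcup_b Y_b^*$ have common $\chi$-colored neighborhood of size $\geq \sqrt{|X_a|/2q^\Delta}$. In the general reduction it is Proposition~\ref{regularity cleaning}, which guarantees lower-regularity of each chosen color class. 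Only after this preprocessing is the auxiliary coloring fixed and the monochromatic $T$ located, and only then does the embedding (which the paper runs via the local lemma, with an asymmetric blowup where the $A$-side is much larger than the $B$-side) go through. Without some such cleaning lemma your inductive step can genuinely fail, so the proposal as written does not prove the theorem.
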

\noindent The latter bound gives the correct dependence on $w$. Indeed, the complete $w$-blowup $H$ of $P_n$ contains $\Omega(n)$ vertex disjoint copies of $K_{w,w}$. Meanwhile (for $w\ge 11$), using the local lemma it is easy to show that any graph with maximum degree at most $2^{w/2}$ has a $2$-edge coloring without any monochromatic copy of $K_{w,w}$ (see Lemma \ref{path-blowup}). Thus if $G\to H$, $G$ must have $\Omega(n)$ vertices of degree $\ge 2^{w/2}$, and so $e(G)\ge \Omega(2^{w/2}n)$. We note that in the non-induced setting an upper bound of $\hat{r}(H;q)\le (kq)^{q^{O(qk)}w}n$ was obtained by Jiang, Milans, and West \cite[Theorem~5.3]{JMW}.

\subsection{Idea of proof}\label{idea of proof} A large amount of work on the size-Ramsey numbers and induced Ramsey numbers (including \cite{beck,FS,DGK,GH}) have considered the host graph $G$ to be a random graph $G(N,p)$ for appropriately chosen $N,p$ and used the generic pseudorandom properties it satisfies. However, we shall instead argue by using a more carefully constructed host graph (taking a ``gadget-based'' approach).

Our construction will start with a non-induced Ramsey host graph $G$ of $H$, where $\Delta(G)$ is bounded, and then consider an appropriate ``pseudorandom blowup'' of $G$. More formally, to prove Theorem~\ref{thm:bonus}, we rely on the following.

\begin{thm}\label{thm:reduction general}
    Fix $k,\Delta,q$, there exists some $s = s(k,\Delta,q)$ so that the following holds. 
    Let $H,G$ be graphs with $G\to (H)_q$. Suppose $\Delta(H)\le k$ and $\Delta(G)\le \Delta$.
    Then, there is an $s$-blowup of $G$, $G'$, so that $G'\to_{\text{ind}} (H)_q$. 
\end{thm}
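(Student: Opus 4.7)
The plan is to construct $G'$ as a suitably pseudorandom $s$-blowup of $G$. For each edge $uv \in E(G)$, we place between the parts $V_u$ and $V_v$ (each of size $s$) a bipartite graph of edge density $p=1/2$ that is strongly pseudorandom in the $(\epsilon,p)$-jumbled sense (such a graph exists via a random construction or explicit bipartite expanders). The parameter $s = s(k,\Delta,q)$ will be chosen large enough at the end of the argument.

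Given any $q$-coloring $\chi$ of $E(G')$, we first reduce to a $q$-coloring $\chi'$ of $E(G)$ by defining $\chi'(uv)$ to be a color attaining density at least $1/q$ in the bipartite graph $G'[V_u,V_v]$ (which exists by pigeonhole). Applying the hypothesis $G \to (H)_q$ to $\chi'$, we obtain an embedding $\psi:V(H)\hookrightarrow V(G)$ and a single color $c$ such that $\chi'(\psi(h)\psi(h')) = c$ for every edge $hh'\in E(H)$. Consequently, the color-$c$ subgraph of $G'[V_{\psi(h)},V_{\psi(h')}]$ has density at least $p/q$ for each such pair.

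We now lift $\psi$ to an induced embedding $\phi:V(H)\hookrightarrow V(G')$ with $\phi(h)\in V_{\psi(h)}$, mapping edges of $H$ to color-$c$ edges of $G'$ and non-edges of $H$ to non-edges of $G'$. Since $G'$ is a blowup, the only non-edges that require attention are pairs $hh'\notin E(H)$ with $\psi(h)\psi(h')\in E(G)$; pairs going between non-adjacent parts of $G$ are automatically non-adjacent in $G'$. We build $\phi$ greedily: order $V(H) = \{h_1,\ldots,h_n\}$ and at step $i$ choose $\phi(h_i)\in V_{\psi(h_i)}$ that is a color-$c$ neighbor of $\phi(h_j)$ for each already-embedded $h_j$ adjacent to $h_i$ in $H$, and a non-neighbor in $G'$ of $\phi(h_j)$ for each already-embedded $h_j$ with $h_jh_i\notin E(H)$ but $\psi(h_j)\psi(h_i)\in E(G)$. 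Using $\Delta(H)\le k$ and $\Delta(G)\le\Delta$ (with $\psi$ injective), there are at most $k$ constraints of the first type and at most $\Delta$ of the second.

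The main obstacle is guaranteeing that these $k+\Delta$ constraints can be simultaneously satisfied at every greedy step. The $\Delta$ non-neighborhood conditions are well-behaved using the pseudorandomness of $G'$: their joint intersection retains a $(1-p)^{\Delta}$ fraction of $V_{\psi(h_i)}$ up to small error. The $k$ color-$c$ neighborhood conditions are the delicate part, since the color-$c$ subgraph within a pair need not be pseudorandom even though the pair itself is, and vertices may have very non-uniform color-$c$ degrees. We address this with a preliminary cleanup step: pass to subsets $V^*_u\subseteq V_u$ of size at least $\alpha s$ (for some $\alpha = \alpha(k,\Delta,q) > 0$) on which the color-$c$ subgraphs between all relevant pairs behave regularly---e.g., by iteratively discarding vertices with atypically small color-$c$ degree in some adjacent part, or by applying a regularity-style decomposition to the colored restriction. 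On these cleaned-up subsets, intersecting $k$ color-$c$ neighborhoods preserves (up to error) a $(p/q)^{k}$-fraction, and the total candidate set for $\phi(h_i)$ remains non-empty provided $s$ is taken exponentially large in $k + \Delta + \log q$. This yields the claimed $s = s(k,\Delta,q)$ and completes the reduction.
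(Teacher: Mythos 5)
Your overall architecture matches the paper's (pseudorandom bipartite gadget blowup, then a ``cleanup'' pass to make the monochromatic subgraphs well-behaved, then a greedy induced embedding), but there is a genuine logical flaw in the order of operations. You define the auxiliary coloring $\chi'$ by taking, for each $uv\in E(G)$, a color of density $\ge 1/q$ in the \emph{full} bipartite graph $G'[V_u,V_v]$, then apply $G\to(H)_q$ to find a monochromatic copy of $H$ in color $c$, and only \emph{afterwards} shrink to subsets $V_u^*\subset V_u$ in which the color-$c$ subgraphs become regular. The problem is that once you have committed to the color $c$ and begin shrinking $V_u$ to make $G'_c[V_u,V_{v_1}]$ regular for one neighbor $v_1$, you have no control over the density of color $c$ in $G'[V_u^*,V_{v_2}]$ for a different neighbor $v_2$ of $u$ in $H$: a subset of $V_u$ of constant proportion can carry essentially none of the color-$c$ edges towards $V_{v_2}$, even though the uncolored graph $G'[V_u^*,V_{v_2}]$ remains pseudorandom. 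The lower-regularity of the gadget controls the uncolored edge count after shrinking, not the per-color edge counts. So the iterative cleanup for the $\le k$ edges of $H$ incident to a vertex $u$ may destroy the density of the color you committed to at the outset.

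The paper avoids this by inverting the order: it first runs the cleaning procedure (Proposition~\ref{regularity cleaning}), which processes the edges of $G$ one matching at a time (via Vizing) and \emph{chooses the auxiliary color of each edge only at the moment it is cleaned}, i.e., on the already-shrunk sets $X_u^{(t+1)},X_v^{(t+1)}$. The lower-regularity of the underlying gadget guarantees that the shrunk pair still has density $\approx p$, so pigeonhole always yields a dense color at that stage, which then feeds into the weak regularity lemma. Only after the cleanup has simultaneously produced the shrunk sets $X_v^*$ and the coloring $\chi$ of $E(G)$ (with all $G'_{\chi(uv)}[X_u^*,X_v^*]$ lower-regular) does the proof invoke $G\to(H)_q$. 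This ordering is essential, not cosmetic. A secondary issue is that your claimed bound ``$s$ exponentially large in $k+\Delta+\log q$'' is too optimistic: the iterated (weak) regularity cleanup loses a super-exponential factor at each of $\Delta+1$ stages, giving a tower-type dependence, as recorded in the quantitative bound on $\lambda(q,\Delta,p,\eta)$ in Proposition~\ref{regularity cleaning}. Your greedy embedding step is fine and mirrors Proposition~\ref{general greedy embedding}; the gap is entirely in how the cleanup interacts with the choice of auxiliary coloring.
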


\noindent
To prove Theorem~\ref{thm:main}, we will use a bipartite-analogue of this result (Theorem~\ref{thm:reduction bipartite}), which has stronger bounds (and adds a new parameter $w$ to get the second part of Theorem~\ref{thm:main}). Theorems~\ref{thm:reduction general} and \ref{thm:reduction bipartite} are the two main results of our work. 

Next, we describe our construction of $G'$ in a bit more detail. We start with some bounded degree host graph $G$ satisfying $G\to (H)_q$. Then, we take some ``pseudorandom bipartite gadget $\Gamma$'' with vertex sets of size $s$ on both sides. We define our new host graph $G'$ by replacing each $v\in V(G)$ by a set $X_v$ of $s$ vertices and for each $uv\in E(G)$ adding a copy of $\Gamma$ between $X_u$ and $X_v$. In this paper our gadgets are properly chosen random bipartite graphs, but we can also choose $\Gamma$ to be a dense spectral expander, for which there are various explicit constructions (e.g., Example~5 from the survey \cite[Section~3]{KS}). To show that $G'$ is induced-Ramsey for $H$, we employ the following high-level strategy.
\begin{enumerate}
    \item First, fix some $q$-coloring $C'$ of $E(G')$.
    \item We then apply a ``cleaning procedure'', producing a $q$-coloring $C$ of $E(G)$, and subsets $(X_v^*)_{v\in V(G)}$, so that for any edge $e=uv\in E(G)$, the $C(e)$-monochromatic subgraph of $G'[X_u^*,X_v^*]$ is appropriately ``robust'' (for Theorem~\ref{thm:reduction general}, we shall require said graph is lower-regular).
    \item By assumption, since $G\to (H)_q$, we can find a monochromatic copy of $H$ inside $G$.
    \item Finally, using this monochromatic copy of $H$, we run an embedding procedure to find a monochromatic induced copy of $H$ inside $G'$. Here we use the pseudorandomness assumptions about $G'$ together with the fact that $G$ having bounded degree, to get all our desired non-edges. Meanwhile, the robustness properties ensured by Step~2 are used so that we get our desired monochromatic edges. 
\end{enumerate}
\noindent In the above, it will be important that $s$ is sufficiently large. Indeed, to run our ``induced embedding procedure'' (Step 4), we will need a certain lower bound on the sizes of our sets $X_v^*$ produced by (Step 2). Meanwhile, in the ``cleaning procedure'' (Step 2), our sets will be forced to shrink by some constant factor. Both the lower bound and this constant factor will end up depending on $k,\Delta,q$.

Given the outlined framework, a proof of Theorem~\ref{thm:bonus} can be phrased quite concisely. First, we take an appropriate pseudorandom blowup (as described above) of the Ramsey graph $G$, that is provided to us by the work of \cite{BKMMMMP}. For the cleaning procedure needed in Step~2, we can use Proposition~\ref{regularity cleaning} from \cite{CNT}. 
Hence, the only new ingredient we require is a simple embedding result for Step~4, which we prove in Proposition~\ref{general greedy embedding}. Putting these three pieces together immediately gives Theorem~\ref{thm:bonus}.
\section{Preliminaries}
\subsection{Notation}

The graph theoretic notation is mostly standard, although we recall a few specific concepts here. We denote an edge $e$ between two vertices $u$ and $v$ by $uv$. Given a graph $G$, we write $N_G(x)$ to denote the neighborhood of $x$. For a vertex $x\in V(G)$ and $S\subset V(G)$, we write $d_S(x) := |N_G(x)\cap S|$ to count the number of neighbors $x$ has in $S$. Given sets $A,B\subset V(G)$, we write $G[A]$ to denote the subgraph induced by $A$, and $G[A,B]$ to denote the bipartite induced between $A$ and $B$.

\subsection{Lemmata}
In this subsection we collect some technical lemmas which we will need in our proofs. We start by describing pseudorandom bipartite graphs, which we will use for blowups.

\begin{defn}
    We say a bipartite graph $G=(X,Y,E)$ is {\it $(L,p)$-regular} if: for any $X'\subset X$ of size $|X'|\ge L$, there are at most $L$ vertices $y \in Y$ such that  
 $d_{X'}(y)<\frac{1}{2}p|X'|$ or $d_{X'}(y)>2p|X'|$, and similarly for any $Y'\subset Y$ with $|Y'|\ge L$ there are at most $L$ vertices $x \in X$ satisfying
 $d_{Y'}(x)<\frac{1}{2}p|Y'|$ or $d_{Y'}(x)> 2p|Y'|$.
\end{defn}
\noindent We also say that $G = (X,Y,E)$ is $(L,p)$-lower-regular if for any $X'\subset X$ of size $|X'|\ge L$, we have $|\{y\in Y: d_{X'}(y)<\frac{1}{2}p|X'|\}|<L$ (and the similar inequality holds for $Y'\subset Y$ with $|Y'|\ge L$).

A fully standard application of the probabalistic method yields the following claim, which we will use to construct gadgets.
\begin{prp}\label{regular}
      Consider $p\in (0,1)$ and $a,b$. There exists a bipartite graph $\Gamma= (A,B,E)$ which is $(\frac{48}{p}\ln(a+b),p)$-regular. 
\end{prp}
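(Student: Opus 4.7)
The plan is to prove Proposition~\ref{regular} by the standard probabilistic method. I will take $\Gamma = (A,B,E)$ to be the random bipartite graph with $|A|=a$, $|B|=b$ in which each pair in $A\times B$ is included independently with probability $p$, and show that $\Gamma$ is $(L,p)$-regular with positive probability, where $L = \tfrac{48}{p}\ln(a+b)$.

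First, for any fixed $X'\subset A$ of size $m\ge L$ and any fixed $y\in B$, the degree $d_{X'}(y)$ is distributed as $\Bin(m,p)$. Multiplicative Chernoff gives $\Pr[d_{X'}(y)<\tfrac12 pm]\le \exp(-pm/8)$ and $\Pr[d_{X'}(y)>2pm]\le \exp(-pm/3)$, so $y$ is ``bad for $X'$'' with probability at most $2\exp(-pm/8)$. The constant $48$ is chosen precisely so that at $m=L$ this quantity equals $2(a+b)^{-6}$, leaving a comfortable exponent to absorb the subsequent union bounds.

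Second, with $X'$ fixed, the events ``$y$ is bad for $X'$'' across $y\in B$ depend on disjoint edge sets and are therefore mutually independent. Hence the probability that more than $L$ vertices of $B$ are bad for $X'$ is at most $\binom{b}{L+1}\bigl(2\exp(-pm/8)\bigr)^{L+1}$. Union-bounding over $X'\subset A$ of each size $m\ge L$, the probability that some $X'$ violates the condition is at most $\sum_{m\ge L} P_m$, where
\[
P_m \;:=\; \binom{a}{m}\binom{b}{L+1}\bigl(2\exp(-pm/8)\bigr)^{L+1}.
\]
Using $\binom{a}{L}\binom{b}{L+1}\le (e(a+b)/L)^{2L+1}$ and $\exp(-pL/8)=(a+b)^{-6}$, a direct calculation (valid since $L\ge 48\ln 2 > 2e$ whenever $p\le 1$) gives $P_L\le (a+b)^{-4L-5}$. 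For $m\ge L$ the ratio
\[
\frac{P_{m+1}}{P_m} \;=\; \frac{a-m}{m+1}\exp\bigl(-p(L+1)/8\bigr) \;\le\; \frac{1}{(a+b)^{5}}
\]
is bounded by $1/2$, so the sum is geometric and controlled by $2P_L$.

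By symmetry, the analogous estimate handles the bad event for subsets $Y'\subset B$, and a final union bound shows $\Gamma$ fails to be $(L,p)$-regular with probability at most $4(a+b)^{-4L-5}<1$, so such a graph exists. The only delicate aspect of the proof is the arithmetic balancing the three competing factors $\binom{a}{m}$, $\binom{b}{L+1}$, and $(\exp(-pm/8))^{L+1}$ inside $P_m$; the specific constant $48$ is there to guarantee the Chernoff exponent dominates both binomial coefficients with room to spare, after which everything else is routine.
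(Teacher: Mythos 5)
Your proof is correct, and it takes a genuinely different route from the paper. The paper first proves a separate lemma (Lemma~\ref{regular G}): a random graph $G(n,p)$ with $n=a+b$ has, with positive probability, the property that $e_G(X',Y')$ is within a $(1\pm\ep)$-factor of $pt^2$ for \emph{all} disjoint pairs of $t$-sets. Regularity of $\Gamma=G[A,B]$ is then derived by contradiction: a large set $X'$ with many ``bad'' vertices yields, via pigeonhole, a $t$-set of vertices all bad in the same direction, and averaging over a random $t$-subset of $X'$ contradicts the edge-count property. You instead sample $\Gamma$ as a random bipartite graph directly, fix $X'$, use independence of the per-vertex badness events (which the two-sided, non-bipartite formulation in Lemma~\ref{regular G} does not exploit) to get a strong bound on the chance of $>L$ bad vertices, and then sum a rapidly decaying union bound over all set sizes $m\ge L$. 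The arithmetic checks out: at $m=L$ the badness probability is $2(a+b)^{-6}$, which dominates $\binom{a}{m}\binom{b}{L+1}\le (e(a+b)/L)^{2L+1}$ once $L\ge 2e$, and the ratio $P_{m+1}/P_m\le (a+b)^{-5}\le\tfrac12$ makes the tail geometric. Both approaches land on the same value of $L$. The paper's route yields a reusable concentration lemma about arbitrary disjoint set pairs in $G(n,p)$, at the cost of an extra averaging step; yours is the more standard one-shot union bound and is arguably shorter, but is specific to the bipartite setting. Either proof is fine for the purposes of the paper.
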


To prove this proposition we use the following lemma.

\begin{lem}\label{regular G}
    Consider $p,\ep \in (0,1)$ and $t\ge 1$. Assume $n\le \exp\left(\frac{t\ep^2 p}{6}\right) $. Then there exists an $n$-vertex graph $G$, so that for any two disjoint sets $X',Y'\subset V(G)$ of size $t$, \[|e_G(X',Y')-pt^2| \le \ep p t^2.\] 
    \begin{proof}
        We sample $G\sim G(n,p)$. Let $\mathbf{Z}$ count the number of unordered ``bad'' pairs of disjoint $X',Y'\in \binom{V(G)}{t} $ with 
        \[|e_G(X',Y')-pt^2|> \ep p t^2.\]
For any fixed choice of $X',Y'$, we estimate the probability that they are bad using a standard Chernoff
bound (see \cite[Corollary~A.1.14]{AS}). It says that for any $\delta\in (0,1)$ and binomial random variable $X\sim \Bin(n,p)$, 
\begin{eqnarray}
    \label{chernoff}
    \P(|X-np|>\delta np) \le 2 e^{-\delta^2 np/3}.
\end{eqnarray}
 Since the number of edges between $X',Y'$ is distributed like $\Bin(t^2,p)$, we can bound
the probability that they are bad by $2e^{-\ep^2p t^2/3}$. So, considering all possible unordered pairs, we have that
        \[\E[\mathbf{Z}] \le \frac{1}{2}\binom{n}{t}\binom{n-t}{t} \cdot  2e^{-\ep^2p t^2/3}< n^{2t} 
        e^{-\ep^2p t^2/3}.\] Using our assumptions on $n$, it is easy to check that $\E[\mathbf{Z}]<1$, whence there must be some outcome of $G$ with the desired properties.
    \end{proof}
\end{lem}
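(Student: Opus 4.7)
The plan is to prove this via the probabilistic method, by sampling the Erd\H{o}s--R\'enyi random graph $G(n,p)$ and showing that with positive probability every disjoint pair of $t$-sets has edge count close to its expectation.

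First, I would let $G \sim G(n,p)$, so that for any fixed disjoint pair $X', Y' \subset V(G)$ of size $t$, the random variable $e_G(X',Y')$ is distributed as $\Bin(t^2, p)$ with mean $pt^2$. The standard multiplicative Chernoff bound then yields
\[\P\!\left(|e_G(X',Y') - pt^2| > \ep p t^2\right) \le 2 \exp\!\left(-\frac{\ep^2 p t^2}{3}\right).\]

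Next I would take a union bound over all unordered pairs of disjoint $t$-subsets. The number of such pairs is at most $\binom{n}{t}\binom{n-t}{t} \le n^{2t}$, so if $\mathbf{Z}$ counts the ``bad'' pairs (those whose edge count deviates by more than $\ep p t^2$ from $pt^2$), then
\[\E[\mathbf{Z}] \le 2 n^{2t} \exp\!\left(-\frac{\ep^2 p t^2}{3}\right).\]
To conclude by the first-moment method, it is enough to check that this expectation is strictly less than $1$. Taking logarithms, the required inequality becomes $2t \ln n + \ln 2 < \ep^2 p t^2 /3$, which up to absorbing lower-order terms is exactly $\ln n \le \ep^2 p t / 6$, matching the hypothesis $n \le \exp(t \ep^2 p / 6)$. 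Hence some realization of $G$ satisfies $\mathbf{Z} = 0$, giving the desired graph.

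I do not anticipate any genuine obstacle: this is a textbook Chernoff-plus-union-bound computation. The only point requiring mild care is the arithmetic of the constants, specifically checking that the factor of $6$ in the hypothesis comfortably absorbs both the denominator $3$ in the Chernoff exponent and the multiplicative $2$ from the Chernoff bound; this is handled by noting that $\binom{n}{t}\binom{n-t}{t}$ is in fact smaller than $n^{2t}$ by a factor of roughly $(t!)^2$, leaving plenty of slack.
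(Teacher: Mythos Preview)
Your proposal is correct and follows essentially the same approach as the paper: sample $G\sim G(n,p)$, apply the multiplicative Chernoff bound to each pair, and take a union bound to get $\E[\mathbf{Z}]<1$. The only cosmetic difference is that the paper absorbs the factor of $2$ from Chernoff by the $\tfrac{1}{2}$ coming from counting \emph{unordered} pairs (so that $\tfrac{1}{2}\binom{n}{t}\binom{n-t}{t}\cdot 2 < n^{2t}$ directly), rather than invoking the $(t!)^2$ slack as you do.
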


    \begin{proof}[Proof of Proposition~\ref{regular}]
        Write $n:= a+b, \ep:=1/2,p:=p, t:= \frac{6}{\ep^2 p}\ln n = \frac{24}{p}\ln n$. We can apply Lemma~\ref{regular G} with these parameters to get some appropriate $n$-vertex graph $G$. We will take $\Gamma = G[A,B]$ where $A,B$ are any two disjoint sets of size $a,b$ respectively, and argue this is $(2t,p)$-regular (as desired). 
        
Indeed, suppose there was some set $X'\subset X$ of size at least $2t$ so that there are at least $2t$ vertices $y\in Y$ satisfying $d_{X'}(y)<\frac{1}{2}p|X'|$ or $2p|X'|<d_{X'}(y)$. By pigeonhole principle, we can find some $Y'$ (disjoint from $X'$) of size $t$ so that either $d_{X'}(y)> 2p|X'|>(1+\ep)p|X'|$ for all $y\in Y'$ or $d_{X'}(y)
<(1-\ep)p|X'|$ for all $y\in Y'$.
Taking a random $X''\subset X'$ of size exactly $t$, we either have $\E[e(G[X'',Y'])]>(1+\ep)p t^2$ or $\E[e(G[X'',Y'])]<(1-\ep)p t^2 $, contradicting in either case the properties of $G$. Arguing symmetrically for sets $Y'\subset Y$ establishes that $\Gamma$ is $(2t,p)$-regular, as desired.
    \end{proof}

Next, we prove the bound from the introduction which we use to show the tightness of Theorem \ref{thm:main} for blow-ups of trees 
\begin{lem}
   \label{path-blowup}
Fix $w\ge 11$. Let $G$ be a graph with maximum degree at most $2^{w/2}$. Then $G$ has a $2$-edge coloring with no monochromatic $K_{w,w}$.
\begin{proof}
         To prove this statement we use the Lov\'asz Local Lemma (see, e.g., \cite[Lemma~5.1.1]{AS}), which says that if $(E_i)_{i\in I}$ is a collection of events so that $E_i$ is mutually independent from all but at most $D$ other events $E_j$ and 
        \begin{eqnarray}
        \label{local-lemma}
            \P(E_i) \le \frac{1}{e(D+1)},
        \end{eqnarray}
  then with positive probability none of the events $E_i$ occur.      
        
Consider a uniformly random 2-edge coloring of $G$. For each $H\subset G$ isomorphic to $K_{w,w}$, let $E_H$ be the event that $H$ is monochromatic. Clearly, $\P(E_H) = p:= 2^{1-w^2}$ for each $H$. Moreover, two events $E_H, E_{H'}$ can be dependent only if $H,H'$ share an edge. We can now use that $G$ has bounded degree to control dependencies. Indeed, for any $e=uv\in E(G)$, the number of copies $H$ containing $e$ is at most $\binom{d(u)}{w-1} \binom{d(v)}{w-1} \le 2^{w^2-w}\le \frac{1}{12 w^2}2^{w^2}$ (assuming $w\ge 11$). Since every copy of $K_{w,w}$ has $w^2$ edges, each event is independent from all but at most $D \leq \frac{1}{12}2^{w^2}$ other events. As $e(D+1)p<1$ we can use the local lemma to get 
a coloring without monochromatic $K_{w,w}$.
    \end{proof}
\end{lem}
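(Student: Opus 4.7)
The plan is to invoke the Lovász Local Lemma on a uniformly random $2$-edge coloring of $G$. For each subgraph $H \subseteq G$ isomorphic to $K_{w,w}$, let $E_H$ denote the bad event that all $w^2$ edges of $H$ receive the same color. Then $\P(E_H) = 2 \cdot 2^{-w^2} = 2^{1-w^2}$, and two events $E_H, E_{H'}$ are mutually independent whenever the copies $H,H'$ share no edge. So to apply LLL I need to bound the edge-sharing dependency degree.

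For a fixed edge $uv \in E(G)$, the number of copies of $K_{w,w}$ in $G$ containing $uv$ is at most $\binom{\Delta}{w-1}^2$, since one must pick $w-1$ additional neighbors on each side from $N(u)$ and $N(v)$. Using $\Delta \leq 2^{w/2}$, this is at most $\Delta^{2(w-1)} \leq 2^{w(w-1)} = 2^{w^2 - w}$. Since each $K_{w,w}$ has $w^2$ edges, every event $E_H$ is mutually independent from all but at most $D \leq w^2 \cdot 2^{w^2 - w}$ other events.

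The symmetric LLL then requires $e\, (D+1)\, \P(E_H) \leq 1$, which after substitution reduces to a clean inequality of the form $2e w^2 \cdot 2^{-w} \leq 1$, equivalently $2^w \geq 2e w^2$. This threshold is satisfied once $w \geq 11$ (one checks $2^{11} = 2048 > 2e \cdot 121 \approx 658$, and the gap grows for larger $w$). Hence with positive probability none of the events $E_H$ occurs, producing a $2$-edge coloring of $G$ with no monochromatic $K_{w,w}$.

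The only obstacle is verifying the precise numerical threshold; the conceptual content is entirely standard. One could alternatively run a deletion-method argument, first coloring uniformly and then recoloring one edge from each monochromatic $K_{w,w}$, but this would give a slightly weaker constant and is unnecessary since LLL already matches the target degree bound $2^{w/2}$ up to constants.
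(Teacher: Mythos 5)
Your argument is essentially identical to the paper's: a uniformly random $2$-coloring, the symmetric Lov\'asz Local Lemma, the same bound $\binom{\Delta}{w-1}^2 \le 2^{w^2-w}$ on copies of $K_{w,w}$ through a fixed edge, the dependency degree $D \le w^2 \cdot 2^{w^2-w}$, and the same verification that $e(D+1)\P(E_H) < 1$ holds once $w \ge 11$. The only cosmetic difference is that the paper rewrites $2^{w^2-w} \le \tfrac{1}{12w^2}2^{w^2}$ before multiplying by $w^2$, whereas you carry $w^2$ through and simplify to $2^w \ge 2ew^2$; both give the same threshold.
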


Finally, we need a short lemma whose proof is based on \textit{dependent random choice}, which is a powerful tool in extremal combinatorics (see \cite{FS2} for a discussion of many applications). Given a sufficiently dense bipartite graph $\Gamma=(X,Y,E)$, this method allows one to pass to some subgraph $\Gamma'= (X,Y',E)$ so that any $k$ vertices in $Y'$ have many common neighbors in $X$. 

However, for our applications, we need a slightly more technical statement. Specifically, given multiple subsets $Y_1,\dots,Y_t\subset Y$, and appropriate assumptions on the subgraphs $\Gamma[X,Y_i]$, we want to find an outcome of $Y'$ where $Y'\cap Y_i$ is simultaneously non-empty for each $i=1,\dots,t$. This forces us to be more careful and use a simple but slightly different variant of the usual dependent random choice argument.

\begin{lem}\label{basic convexity}
    Let $h\ge 1$, $p\in (0,1)$ and let $\Gamma =(X,Y,E)$ be a bipartite graph with at least $p |X||Y|$ edges. If we sample $x_1,\dots,x_h \in X$ uniformly at random with repetitions, then
    \[\P\big(|\cap_{i=1}^h N(x_i)|\ge \frac{p^h}{2}|Y|\big)>\frac{p^h}{2}|Y|. \]
\end{lem}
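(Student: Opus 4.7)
The plan is to follow a standard dependent random choice argument. Let $Z := |\cap_{i=1}^h N(x_i)|$ be the (random) number of common neighbours of the sampled vertices, and note that the statement amounts to a lower bound on the probability that $Z$ is large. I would first compute $\E[Z]$ via linearity of expectation: for each $y\in Y$, the probability that $y$ lies in every $N(x_i)$ is $(d(y)/|X|)^h$ (using that the $x_i$ are independent and uniform in $X$), so
\[\E[Z] = \sum_{y\in Y}\left(\frac{d(y)}{|X|}\right)^h.\]

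The key estimate is then a one-line convexity argument: since $t\mapsto t^h$ is convex, Jensen's inequality (applied to the uniform distribution on $Y$) gives
\[\sum_{y\in Y}\left(\frac{d(y)}{|X|}\right)^h \;\ge\; |Y|\left(\frac{1}{|Y|}\sum_{y\in Y}\frac{d(y)}{|X|}\right)^{\!h} \;=\; |Y|\left(\frac{e(\Gamma)}{|X||Y|}\right)^{\!h} \;\ge\; p^h|Y|,\]
where in the last step we used the hypothesis $e(\Gamma)\ge p|X||Y|$. So $\E[Z]\ge p^h|Y|$.

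Finally, I would convert this expectation bound into the desired probability statement via a reverse Markov argument, using the a priori upper bound $Z\le |Y|$. Concretely, if the event in question failed with probability at most $p^h/2$, then
\[\E[Z] \;<\; \tfrac{p^h}{2}|Y| + \tfrac{p^h}{2}\cdot |Y| \;=\; p^h|Y|,\]
contradicting the lower bound on $\E[Z]$; hence the event holds with probability strictly greater than $p^h/2$ (which is the natural reading of the stated bound). I do not anticipate any genuine obstacle here: both ingredients are elementary, and the only care needed is to use Jensen in the right direction and to remember that $Z$ is deterministically bounded by $|Y|$, which is what makes the reverse Markov step work.
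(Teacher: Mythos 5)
Your proof is correct and follows the same route as the paper: linearity of expectation plus Jensen gives $\E[Z]\ge p^h|Y|$, and the deterministic bound $Z\le |Y|$ then yields the probability estimate by reverse Markov. One small note: the lemma's stated conclusion ``$>\frac{p^h}{2}|Y|$'' is a typo for a bound of the form $\ge \frac{p^h}{2}$, and both your argument and the paper's actually produce the non-strict inequality $\P(Z\ge \frac{p^h}{2}|Y|)\ge \frac{p^h}{2}$, which is what is used downstream (Proposition~\ref{simultaneous DRC} supplies the strictness from its own hypothesis); so your parenthetical reading of the bound is the right one.
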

\begin{proof}
    Consider the random variable $\mathbf{Z}:=|\cap_{i=1}^h N(x_i)|$. 
    Using Jensen's inequality, we have $$\E[\mathbf{Z}] = \sum_{y\in Y} (d(y)/|X|)^h \ge 
    |Y|\cdot \left(\frac{\sum_{y\in Y} d(y)}{|X||Y|}\right)^h\geq p^h |Y|.$$ 
   Since $\mathbf{Z}\le |Y|$ always holds, we have $$p^h|Y|\le \E[\mathbf{Z}]\le |Y|\cdot \P(\mathbf{Z}\ge p^h |Y|/2)+p^h|Y|/2.$$ Rearranging gives $\P(|N(x_1)\cap \dots \cap N(x_h)|\ge \frac{p^h}{2}|Y|)\geq p^h/2$, as required.
\end{proof}

\begin{prp}\label{simultaneous DRC}
    Fix integers $\ell,h,r$. Let $\Gamma =(X,Y,E)$ be a bipartite graph and let $Y_1,\dots,Y_\ell$ be subsets of $Y$ so that $|N(x)\cap Y_i|\ge p |Y_i|$ for each $x\in X$ and $i\in [\ell]$.
    Suppose that $p^{h\ell}/2 > |Y|^r|X|^{-h/2}$. Then
    there exists subsets $Y_i'\subset Y_i, i\in [\ell]$ such that $|Y_i'|\ge \frac{p^{h\ell}}{2}|Y_i|$ for $i\in [\ell]$ and 
     for any $y_1,\dots,y_r\in \bigcup_i Y_i'$, we have that $|N(y_1)\cap \dots N(y_r)|\ge \sqrt{|X|}$.
\end{prp}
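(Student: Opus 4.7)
The plan is to reduce to a single application of Lemma~\ref{basic convexity} by passing to an auxiliary product construction. Let $\YY := Y_1 \times \cdots \times Y_\ell$ and consider the bipartite graph $\Gamma'$ between $X$ and $\YY$, where $x\in X$ is joined to $(y_1,\dots,y_\ell)\in\YY$ exactly when $x\in N_\Gamma(y_i)$ for every $i$. The hypothesis $|N_\Gamma(x)\cap Y_i|\ge p|Y_i|$ immediately gives $|N_{\Gamma'}(x)|=\prod_i |N_\Gamma(x)\cap Y_i|\ge p^\ell|\YY|$ for each $x\in X$, so $\Gamma'$ has edge density at least $p^\ell$.

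I then plan to apply Lemma~\ref{basic convexity} to $\Gamma'$ with density $p^\ell$ and the parameter $h$: sampling $x_1,\dots,x_h\in X$ uniformly with repetitions, with probability at least $p^{h\ell}/2$ the common $\Gamma'$-neighborhood in $\YY$ has size at least $\tfrac{p^{h\ell}}{2}|\YY|$. The crucial observation is that this common neighborhood is automatically a product set, namely $Y_1^*\times\cdots\times Y_\ell^*$ with $Y_i^* := Y_i\cap \bigcap_j N_\Gamma(x_j)$. Consequently $\prod_i |Y_i^*|\ge \tfrac{p^{h\ell}}{2}\prod_i |Y_i|$, and since each $|Y_i^*|\le |Y_i|$ this actually forces $|Y_i^*|\ge \tfrac{p^{h\ell}}{2}|Y_i|$ for every individual $i$ simultaneously. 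This product trick is the main conceptual step: the naive alternative of sampling $h$ vertices and trying to union-bound over the $\ell$ sets using Lemma~\ref{basic convexity} directly does not yield a useful probability, which is the obstacle one first encounters.

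Finally, the $r$-tuple condition is handled by a standard counting step. Call $(y_1,\dots,y_r)\in Y^r$ \emph{bad} if $|N(y_1)\cap\cdots\cap N(y_r)|<\sqrt{|X|}$. For any fixed bad tuple, the probability that every $y_j$ lies in $\bigcap_k N_\Gamma(x_k)$ equals the probability that every $x_k$ lies in $\bigcap_j N(y_j)$, which is strictly less than $|X|^{-h/2}$ by independence of the $x_k$'s. Hence the expected number of bad tuples contained in $\bigcap_k N_\Gamma(x_k)$ is less than $|Y|^r|X|^{-h/2}$. The hypothesis $p^{h\ell}/2>|Y|^r|X|^{-h/2}$ combined with a union bound then yields positive probability for the joint event that the product-size bound holds and no bad tuple is contained in $\bigcap_k N_\Gamma(x_k)$. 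Fixing any such outcome and setting $Y_i':=Y_i^*$ delivers the size conclusion, while $\bigcup_i Y_i'\subseteq\bigcap_k N_\Gamma(x_k)$ guarantees that no bad $r$-tuple lies in $\bigcup_i Y_i'$, as required.
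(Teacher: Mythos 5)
Your proof is correct and takes essentially the same approach as the paper: both form the auxiliary bipartite graph between $X$ and the product $Y_1\times\cdots\times Y_\ell$, apply Lemma~\ref{basic convexity} to it, observe that the common-neighborhood product set structure converts the product-size bound into the simultaneous per-coordinate bounds, and separately union-bound over bad $r$-tuples using $\P(\{y_1,\dots,y_r\}\subseteq \bigcap_k N(x_k)) < |X|^{-h/2}$. The paper's comparison $\P(\EE_{\text{good}})>\P(\EE_{\text{bad}})$ is exactly your ``union bound'' at the end, so the arguments coincide step for step.
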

\begin{proof}
    Sample $x_1,\dots,x_h \in X$ uniformly at random with repetitions, and set $Y' := \bigcap_{i=1}^h N(x_i)$. Let $\EE_{\text{good}}$ be the event $|Y_i\cap Y'|\ge \frac{p^{h\ell}}{2}|Y_i|$ for all $i\in [\ell]$, and $\EE_{\text{bad}}$ be the event that $|\cap_{i=1}^r N(y_i)|<|X|^{1/2}$ for some choice of $y_1,\dots,y_r\in Y'$. 
    We prove that $\P(\EE_{\text{good}})>\P(\EE_{\text{bad}})$, which implies the existence of an outcome of $Y'$ satisfying only our good event. Taking $Y_i':= |Y_i\cap Y'|$ for $i\in [\ell]$ gives the desired result. 

    First, we note that $\P(\EE_{\text{bad}})\le |Y|^r|X|^{-h/2}$. Indeed, for any fixed choice of $y_1,\dots,y_r\in Y$, we have that $\P(\{ y_1,\dots,y_r\}\subset Y') = \left(|\cap_{i=1}^r N(y_i)|/|X|\right)^h$. In particular, when $|\cap_{i=1}^r N(y_i)|<|X|^{1/2}$,  this probability is at most $|X|^{-h/2}$. Taking a union bound over all such tuples gives $\P(\EE_{\text{bad}})\le |Y|^r|X|^{-h/2}$.

    We now estimate the probability of the good event. Define an auxiliary bipartite graph $\Tilde{\Gamma}$ with parts $X$ and $\Tilde{Y}:=Y_1\times \dots \times Y_\ell$. For $x\in X$, we say $x\sim_{\Tilde{\Gamma}} (y_1,\dots,y_\ell) $ if $\{y_1,\dots,y_\ell\}\subset N_\Gamma(x)$. 
    
    Let $\Tilde{Y}':= N_{\Tilde{\Gamma}}(x_1)\cap\dots \cap N_{\Tilde{\Gamma}}(x_h)$, where these $x_i$ come from the same random tuple which determined $Y'$. Noting that $|\Tilde{Y}'| = \prod_{i=1}^\ell |Y_i\cap Y'|$, we see that $$\frac{|\Tilde{Y}'|}{|\Tilde{Y}|}=\prod_{i=1}^\ell \frac{|Y_i\cap Y'|}{|Y_i|} \le \min_{i\in [\ell]}\left\{\frac{|Y_i\cap Y'|}{|Y_i|}\right\}.$$ Thus the probability $\EE_{\text{good}}$ holds is at least $\P(|\Tilde{Y}'|\ge \frac{p^{h\ell}}{2}|\Tilde{Y}|)$. 

    By the minimum degree assumptions, $d_{\Tilde{\Gamma}}(x) = \prod_{i=1}^\ell |N_\Gamma(x)\cap Y_i| \ge \prod_{i=1}^\ell p |Y_i| = p^\ell |\Tilde{Y}|$ for every $x\in X$. It follows that $\Tilde{\Gamma}$ has density at least $p^\ell$. Applying Lemma~\ref{basic convexity}, we have that $\P(|\Tilde{Y}'|\ge \frac{p^{h\ell}}{2}|\Tilde{Y}|)\ge \frac{p^{h\ell}}{2}$. Using this, together with the assumption $p^{h\ell}/2 > |Y|^r|X|^{-h/2}$, gives $\P(\EE_{\text{good}})>\P(\EE_{\text{bad}})$, completing the proof. 
\end{proof}

\section{Induced embedding}\label{embedding section}
In this section, we prove two ``induced embedding results'' for pseudorandom blowups of graphs. The idea behind the first result is to employ a greedy embedding procedure, using pseudorandomness (see Property~2 below) to control our non-edges. 
\begin{prp}[Induced embedding with regularity]\label{general greedy embedding}
    Let $H\subset G$ be graphs, with $\Delta(H) \le k$ and $\Delta(G) \le \Delta$. Let $s^*,L,L',p,\rho$ be constants so that
    \begin{equation}\label{s-star large}
        s^*(\rho/2)^k (1-2p)^\Delta > \Delta L+kL'. 
    \end{equation}

    Now suppose there are graphs $H^* \subset G^*$, along with a homomorphism $\phi:G^* \to G$, so that writing $X_v := \phi^{-1}(v)$ for $v\in V(G)$, we have:
    \begin{enumerate}
        \item $|X_v| \ge s^*$ for $v\in V(G)$;
        \item $G^*[X_u,X_v]$ is $(L,p)$-regular for $uv\in E(G)$;
        \item $H^*[X_u,X_v]$ is $(L',\rho)$-lower-regular for $uv\in H$.
    \end{enumerate}
    Then, we can find a set of vertices $W$ so that $H^*[W] \cong H \cong G^*[W] $.
\end{prp}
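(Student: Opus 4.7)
The plan is to construct $W=\{w_1,\dots,w_n\}$ with $w_i\in X_{v_i}$ greedily, maintaining shrinking candidate sets $C_j\subseteq X_{v_j}$ that record which choices remain compatible with the partial embedding. I would fix an arbitrary enumeration $v_1,\dots,v_n$ of $V(H)$, initialize $C_j:=X_{v_j}$, and at each step $i$ pick $w_i\in C_i$ and update the future candidate sets: if $v_iv_j\in E(H)$, replace $C_j$ by $C_j\cap N_{H^*}(w_i)$; if $v_iv_j\in E(G)\setminus E(H)$, replace $C_j$ by $C_j\setminus N_{G^*}(w_i)$; otherwise do nothing. No update is needed in this last case, since $\phi$ being a homomorphism forces $G^*$ (and hence $H^*$) to have no edges between $X_{v_i}$ and $X_{v_j}$ whenever $v_iv_j\notin E(G)$.

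The key invariant to maintain is that on entering step $j$ one has $|C_j|\ge s^*(\rho/2)^{a_j}(1-2p)^{b_j}$, where $a_j$ and $b_j$ count the previously embedded $H$-neighbors and $(G\setminus H)$-neighbors of $v_j$, respectively. Since $a_j\le k$ and $a_j+b_j\le \Delta$, this is at least $s^*(\rho/2)^k(1-2p)^\Delta$, which by~\eqref{s-star large} exceeds $\Delta L+kL'$ and in particular both $L$ and $L'$. To pick $w_i$, I would call a vertex $x\in X_{v_i}$ \emph{bad} if some future $H$-neighbor $v_j$ of $v_i$ has $d_{H^*, C_j}(x)<\rho|C_j|/2$, or some future $(G\setminus H)$-neighbor $v_j$ has $d_{G^*, C_j}(x)>2p|C_j|$. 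By $(L',\rho)$-lower-regularity of $H^*[X_{v_i},X_{v_j}]$, each of the at most $k$ sets of the first type contributes $\le L'$ bad vertices; by $(L,p)$-regularity of $G^*[X_{v_i},X_{v_j}]$, each of the at most $\Delta$ sets of the second type contributes $\le L$. The total bad count is therefore $\le kL'+\Delta L<|C_i|$, so a valid $w_i$ exists, and any such choice shrinks each affected $C_j$ by at most $\rho/2$ or $1-2p$, preserving the invariant.

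Once the process terminates, $W$ will satisfy $H^*[W]\cong H\cong G^*[W]$: for $v_iv_j\in E(H)$ the update at step $\min(i,j)$ forced $w_iw_j\in E(H^*)$; for $v_iv_j\in E(G)\setminus E(H)$ the update excluded $w_iw_j$ from $E(G^*)$; and for $v_iv_j\notin E(G)$ the absence of such an edge is automatic. The only real obstacle is the combinatorial accounting: matching the worst-case multiplicative shrinkage $(\rho/2)^k(1-2p)^\Delta$ incurred across an entire run to the worst-case additive bad-vertex budget $\Delta L+kL'$ at a single step, which is exactly the content of hypothesis~\eqref{s-star large}. Beyond this the argument is a straightforward greedy induction.
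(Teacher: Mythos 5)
Your proof is correct and follows essentially the same approach as the paper's: a greedy vertex-by-vertex embedding with shrinking candidate sets, maintaining a multiplicative lower bound $s^*(\rho/2)^{a_j}(1-2p)^{b_j}$ on the candidate sets, and at each step counting at most $kL'+\Delta L$ bad vertices using lower-regularity of $H^*$ (for $H$-neighbors) and regularity of $G^*$ (for $G\setminus H$-neighbors). The only difference is cosmetic notation.
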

\begin{proof}[Proof of Proposition~\ref{general greedy embedding}]
Write $h:= |V(H)|$. Let $v_1,\dots,v_h$ be any ordering of $V(H)$. For $i\in [h]$, let $J_i := \{j<i: v_j\in N_H(v_i)\},\overline{J}_i := \{j'<i : v_{j'}\in N_G(v_i) \setminus N_H(v_i)\}$. For $t= 0,\dots,h$, let $H^{(t)} := H[\{v_1,\dots,v_t\}]$ denote the subgraph of $H$ induced by the first $t$ vertices of our ordering.

We embed vertices $v_i$ one by one using the following iterative procedure. For every $0\leq t \leq h-1$, at the beginning of stage $t+1$ we have already chosen vertices $x_1,\dots,x_t$, and for $i>t$ have sets 
\[X_i^{(t)} := X_{v_i}  \cap \Big(\bigcap_{j\in [t] \cap J_i } N_{H^*}(x_j)\Big) \setminus \Big(\bigcup_{j'\in [t] \cap \overline{J}_i } N_{G^*}(x_{j'}) \Big)\] which satisfy the following conditions.
    \begin{itemize}
        \item The map $x_i \mapsto v_i$ is an isomorphism into $H^{(t)}$ for both $H^*$ and $G^*$,
        \item and $|X_i^{(t)}|\ge s^* (\rho/2)^{|[t]\cap J_i|} (1-2p)^{|[t] \cap \overline{J}_i|}$ for each $i>t$.
    \end{itemize}
Note that for $t = 0$, this is vacuously satisfied, since $X_i^{(0)} = X_{v_i}$ and has size at least $s^*$ by Property~1. Now, assuming that our assumptions hold at the end of stage $t-1$, we show how to pick $x_t\in X_t^{(t-1)}$ appropriately. 

By definition, we have that every $x_t \in X_t^{(t-1)}$ satisfies $x_t \in N_{H^*}(x_j) $ for $j\in J_t$, and $x_t\not\in N_{G^*}(x_{j'})$ for $j'\in \overline{J}_t$. We further have $x_t \not \in N_{G^*}(x_{j''})$ for $j'' \in [t-1] \setminus (J_t\cup \overline{J}_t) $, since $v_{j''}$ is not adjacent to $v_t$ in $G$ and $G^*$ is homomorphic to $G$. This shows that our first bullet holds for any choice of $x_t \in X_t^{(t-1)}$.

To ensure that there is a choice of $x_t$ which satisfies our second bullet
we use our regularity assumptions. First, by our hypotheses, we have for $i>t-1$ that \begin{equation}\label{blob size}
        |X_i^{(t-1)}| \ge s^* (\rho/2)^{|[t]\cap J_i|} (1-2p)^{|[t] \cap \overline{J}_i|}> \Delta L +kL'\ge \max\{L,L'\}.
    \end{equation} 
This uses the assumption given by Eq.~\ref{s-star large} togther with $|[t-1]\cap J_i|\le d_H(v_i)\le k,|[t-1]\cap \overline{J}_i|\le d_G(v_i)\le \Delta$.
    
Next, let $I := \{i>t : t\in J_i\},\overline{I} := \{i>t: t\in \overline{J}_i\}$. For $i\in I$, set 
$B_i$ to be a set of vertices $x\in X_{v_t}$ which in the graph $H^*$ have less than $\frac{\rho}{2}|X_i^{(t-1)}|$ neighbors in $X_i^{(t-1)}$. Similarly for $i'\in \overline{I}$ define \[B_{i'} := \big\{x\in X_{v_t}: |X_{i'}^{(t-1)}\setminus N_{G^*}(x)|<(1-2p)|X_{i'}^{(t-1)}|\big\}. \]
Recalling Eq.~\ref{blob size}, we have $|X_i^{(t-1)}|\ge L$ for each $i\in I$. Whence, by Property~3 we get that $|B_i|\le L'$. Similarly, by Property~2 we get $|B_{i'}|\le L$ for all $i'\in \overline{I}$. 

Using that $|I|\le d_H(v_t)\le k,|\overline{I}|\le d_G(v_t) \le \Delta$ together with Eq.~\ref{blob size}, we deduce 
$|(\cup_{i\in I}B_i) \cup (\cup_{i'\in \overline{I}}B_{i'})|\le \Delta L+kL' < |X_t^{(t-1)}|$. Therefore, there exists a choice of $x_t \in X_t^{(t-1)}$ not belonging to any of the sets $B_i, B_{i'}$. Picking this $x_t$ gives the desired lower bounds for each $|X_i^{(t)}|, i>t$ and allows us to continue the next embedding iteration.
\end{proof}

Next, we present another induced embedding result. Compared to Proposition~\ref{general greedy embedding}, where we assumed that $H^*[X_u,X_v]$ was lower-regular between parts, we now have a weaker assumption (Property~3 below) that can be obtained using dependent random choice. 
Consequently, we can no longer greedily embed vertices one at a time in our analysis. Instead, we shall utilize the Lov\'asz Local Lemma (see Eq.~\ref{local-lemma}).
Here we will be embedding $w$ vertices into each blob of the host graph, to find induced copies of $w$-blowups $H'$ of $H$. This does not pose any challenges beyond making our notation slightly more cumbersome. Nothing is lost in the proof by assuming $w=1$ and replacing instances of `$(a,j)$', `$(b,i)$' respectively by `$a$', `$b$'. 
\begin{prp}\label{DRC embedding}
    Let $H\subset G$ be bipartite graphs with a common vertex bipartition $(A,B)$; assume that $\Delta(H) =k, \Delta(G) = \Delta$. Finally consider some $w$-blowup $H'$ of $H$.

    Now let $H^*\subset G^*$ be graphs, with a homomorphism $\phi:G^*\to G$, so that writing $X_a := \phi^{-1}(a)$ for $a\in A$ and $Y_b:= \phi^{-1}(b)$ for $b\in B$, we have:
    \begin{enumerate}
        \item $|Y_b| \ge w s^*$ for $b\in B$;
        \item $G^*[X_a,Y_b]$ is $(L,p)$-regular for $ab\in E(G)$;
        \item given any $a\in A$, $b_1,\dots,b_{wk}\in N_H(a)$ and $y_i\in Y_{b_i}$ for $i\in [wk]$, we have that \[(1-2p )^{\Delta w}\big|X_a \cap \bigcap_{i=1}^{wk} N_{H^*}(y_i)\big| \ge wL.\]
    \end{enumerate}
    Then, assuming \begin{equation}\label{local lemma hypothesis}
        w\Delta \frac{L}{s^* }\le \frac{1}{e(w\Delta^2+1)},
    \end{equation} we can find a set of vertices $W$ so that $H^*[W]\cong H' \cong G^*[W]$.
\end{prp}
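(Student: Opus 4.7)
The plan is to apply the Lov\'asz Local Lemma. I would sample, independently for each $b \in B$, a uniformly random ordered $w$-tuple $Z_b = (y_{b,1},\dots,y_{b,w})$ of distinct vertices of $Y_b$; then, for each $a \in A$, deterministically pick distinct $x_{a,1},\dots,x_{a,w} \in X_a$, and take $W := \{x_{a,j}\} \cup \{y_{b,i}\}$.

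For each $(a,j) \in A \times [w]$ set
\[
T_{a,j} := X_a \cap \bigcap_{(b,i) \in N_{H'}((a,j))} N_{H^*}(y_{b,i}).
\]
Padding the index set to size exactly $wk$ by repetition, Property~3 gives $|T_{a,j}| \ge wL/(1-2p)^{w\Delta} \ge wL \ge L$, so this is a large pool of candidates for $x_{a,j}$ that will realize the required $H^*$-edges. Call $(b,i)$ a \emph{bad neighbor} of $(a,j)$ if $b \in N_G(a)$ and $(a,j)(b,i) \notin E(H')$; there are at most $w\Delta$ bad neighbors of each $(a,j)$. Define
\[
\EE_{a,j} := \bigl\{\text{some bad neighbor } (b,i) \text{ satisfies } |T_{a,j} \cap N_{G^*}(y_{b,i})| > 2p\,|T_{a,j}|\bigr\}.
\]

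To bound $\P[\EE_{a,j}]$, I would condition on the tuples $\{Z_{b'} : b' \in N_H(a)\}$, which determines $T_{a,j}$. Since $|T_{a,j}| \ge L$, the $(L,p)$-regularity of $G^*[X_a, Y_b]$ provided by Property~2 says at most $L$ vertices of $Y_b$ are atypical for $T_{a,j}$; as $y_{b,i}$ is marginally nearly uniform on $Y_b$ with $|Y_b| \ge ws^*$, the conditional probability that it lies in the bad set is at most $O(L/(ws^*))$ (absorbing the slight non-uniformity from sampling without replacement). A union bound over the at most $w\Delta$ bad neighbors yields $\P[\EE_{a,j}] \le O(\Delta L / s^*)$. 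Moreover $\EE_{a,j}$ depends only on $\{Z_b : b \in N_G(a)\}$, so two events $\EE_{a,j}$ and $\EE_{a',j'}$ are independent whenever $N_G(a) \cap N_G(a') = \emptyset$; since at most $\Delta^2$ vertices $a' \in A$ share a $G$-neighbor with a fixed $a$, the dependency degree is at most $w\Delta^2$. The hypothesis \eqref{local lemma hypothesis} then implies the LLL condition $e \cdot \P[\EE_{a,j}] \cdot (w\Delta^2 + 1) \le 1$ with room to spare, so we obtain an outcome of the $Z_b$'s for which no $\EE_{a,j}$ holds.

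Fix such an outcome and set $T_{a,j}^* := T_{a,j} \setminus \bigcup_{(b,i)\text{ bad nbr}} N_{G^*}(y_{b,i})$. Because every bad-neighbor $y_{b,i}$ is typical for $T_{a,j}$, a union bound gives $|T_{a,j}^*| \ge (1 - 2pw\Delta)|T_{a,j}| \ge wL/2$, assuming $p$ is small enough that $2pw\Delta \le 1/2$ (an implicit requirement for Property~3 to be non-vacuous in any useful range). For each $a$, greedily pick $x_{a,j} \in T_{a,j}^* \setminus \{x_{a,1},\dots,x_{a,j-1}\}$ for $j = 1, \dots, w$; this succeeds since $|T_{a,j}^*| \ge wL/2 \ge w$. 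By construction, every $H'$-edge incident to $(a,j)$ becomes an $H^*$-edge (via $T_{a,j}$), every non-$H'$-edge between $G$-adjacent blobs is excluded as a $G^*$-edge (via $T_{a,j}^*$), and $G^*$-edges between $G$-non-adjacent blobs are ruled out because $\phi$ is a graph homomorphism; hence $H^*[W] \cong H' \cong G^*[W]$.

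The main subtlety lies in the probability estimate: the regularity hypothesis only bounds the atypical set for a \emph{fixed} subset $T \subset X_a$ of size $\ge L$, whereas the relevant set $T_{a,j}$ is itself random. The fix is to condition on the $Z_{b'}$ with $b' \in N_H(a)$ (which freezes $T_{a,j}$) \emph{before} invoking regularity, so that the remaining randomness is only in $y_{b,i}$ and the standard $L/|Y_b|$ bound applies. The blowup parameter $w$ enters only as a multiplicative factor in the bad-neighbor count and in the dependency degree, so the argument for general $w$ is essentially identical to the $w = 1$ case as noted in the excerpt.
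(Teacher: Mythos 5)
Your overall architecture (random choice of the $y$'s, Lov\'asz Local Lemma on the blobs $X_a$, greedy choice of the $x$'s afterwards) matches the paper's, but there is a genuine gap in how you bound the shrinkage of the candidate set $T_{a,j}$ once the ``bad'' neighborhoods are removed.

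You remove all bad-neighbor images from $T_{a,j}$ \emph{simultaneously}, so the best your union bound can give is
$|T_{a,j}^*| \ge (1 - 2p\,w\Delta)\,|T_{a,j}|$, which forces you to add the extra hypothesis $2pw\Delta \le \tfrac12$. You claim this is implicit in Property~3, but Property~3 is meaningful for any $p < \tfrac12$ (so that $(1-2p)^{\Delta w} > 0$), with no upper bound on $pw\Delta$. In fact the parameter choice the paper actually uses when applying this proposition (in Theorem~\ref{thm:reduction bipartite}) is $p = \tfrac{1}{4\Delta}$, which gives $2pw\Delta = w/2$; for every $w \ge 2$ this is $\ge 1$, so your bound becomes vacuous exactly in the regime the result is designed for. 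The paper avoids this by peeling off the bad neighbors \emph{one at a time}: it defines $T^{(t)} := T^{(t-1)} \setminus N_{G^*}(y_{b_t,i_t})$ and uses regularity afresh at each step (each step costing a multiplicative factor $1-2p$, so the final set has size $\ge (1-2p)^{\ell}|T^{(0)}|$, always positive). The iterative step works because each $T^{(t-1)}$ is still of size $\ge L$, which is exactly what the strong lower bound $|T^{(0)}| \ge (1-2p)^{-w\Delta} wL$ from Property~3 guarantees. Your single union bound is not a harmless simplification of this; it genuinely loses the multiplicative structure that makes the proposition hold for all $p < \tfrac12$.

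A second, more minor, issue is your conditioning step. You sample a random $w$-tuple $Z_b$ from all of $Y_b$ and then condition on $\{Z_{b'} : b' \in N_H(a)\}$. But a bad neighbor $(b,i)$ can have $b \in N_H(a)$ (the blowup $H'$ need not contain all edges $(a,j)(b,i)$ with $ab \in E(H)$), in which case $y_{b,i}$ has already been frozen by your conditioning and there is no remaining randomness to apply regularity to. The paper sidesteps this cleanly by carving $Y_b$ into $w$ \emph{disjoint} pieces $Y_{b,1},\dots,Y_{b,w}$ of size $s^*$ and sampling $y_{b,i}$ \emph{independently} from $Y_{b,i}$; then $T^{(0)}$ is determined by $\{y_{b,i} : (b,i)\in N_{H'}(a,j)\}$ alone, and each later $y_{b_t,i_t}$ is fresh randomness. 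Your setup could likely be patched (conditioning only on the relevant coordinates, then using near-uniformity of the remaining coordinate), but as written it is not correct. I would recommend adopting the paper's disjoint-partition sampling and, more importantly, the iterative removal: without the latter the proposition simply does not hold with the stated hypotheses.
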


\begin{proof}
    We may write $V(H') = V(H)\times [w]$ so that $(v,i)\mapsto v$ is a homomorphism from $H'$ to $H$. Next, for $b\in B$, we shall fix $w$ disjoint sets $Y_{b,1}\dots,Y_{b,w}\subset Y_b$ each of size $s^*$ (which is possible by Property 1).

    For $(b,i)\in B\times [w]$, we shall pick a random vertex $y_{b,i}\in Y_{b,i}$ (uniformly at random, and independently). For $(a,j)\in A\times [w]$, we let $T_{a,j} \subset X_a$ be the set of ``valid images'' for $(a,j)$, i.e.,
    \[T_{a,j} := X_a \cap \Bigg(\bigcap_{(b,i)\in N_{H'}(a,j)} N_{H^*}(y_{b,i}) \setminus \bigcup_{\substack{(b',i'): b'\in N_G(a),\\ (b',i')\not\in N_{H'}(a,j)}} N_{G^*}(y_{b',i'})\Bigg).\]

    We let $\EE_{a,j}$ be the ``bad'' event that $|T_{a,j}| <w$. Note that if there is an outcome where none of the $\EE_{a,j}$ hold, then we can greedily pick distinct vertices $x_{a,j}$ for $(a,j)\in A\times[w]$ so that $x_{a,j}\in T_{a,j}$. Indeed, we will pick $w$ vertices $x_{a,1},\dots,x_{a,w}$ inside $X_a$, and after embedding the first $t<w$ of them, we still have $w-t>0$ choices for $x_{a,t+1}$ inside $T_{a,t+1}\setminus \{x_{a,1},\dots,x_{a,t}\}$. The map $(a,j)\mapsto x_{a,j},(b,i)\mapsto y_{b,i}$ shall then produce an induced copy of $H$. This follows from the definition of the sets $T_{a,j}$, and the fact that there are no edges between vertices in $\bigcup_{a\in A} X_a$ and $\bigcup_{b\in B} Y_b$. So it suffices to find such an outcome, which we do using the Lov\'asz Local Lemma.

    Observe that the event $\EE_{a,j}$ is mutually independent from all events
    $\EE_{a',j'}$ for which $N_G(a)\cap N_G(a') = \emptyset$ (in which case $a,a'$ are adjacent in the square graph of $G$, which which has maximum degree at most $\Delta^2$). It follows that our dependency graph has maximum degree at most $D:=w\Delta^2$. So by Eq.~\ref{local-lemma}, we are done if we can prove $\P(\EE_{a,j})\le \frac{1}{e(w \Delta^2+1)}$. 
    
This will follow from Eq.~\ref{local lemma hypothesis} together with the following claim. 
    \begin{clm}\label{local fail bound}
        For $(a,j)\in A\times [w]$, we have that $\P(\EE_{a,j})\le w\Delta\frac{L}{s^*}$.
        \begin{proof}
        
            Write $$T^{(0)} =T_{a,j}^{(0)}:= X_a \cap\Big(\bigcap_{(b,i)\in N_{H'}(a,j)} N_{H^*}(y_{b,i})\Big).$$ Since $|N_{H'}(a,j)|\le w|N_H(a)|\le w\Delta(H)\le wk$ (by assumption, recalling $H'$ is a $w$-blowup of $H$), we have by Property~3 that $(1-2 p)^{w\Delta}|T^{(0)}|\ge wL$.

            We now consider the set $\overline{N}:= \{(b,i)\in B\times [w]: b\in N_G(a)\text{ but }(b,i)\not\in N_{H'}(a,j)\}$ of potential non-neighbors. By assumption, $\ell := |\overline{N}|\le wd_G(a)\le w\Delta$. We fix some arbitrary ordering $(b_1,i_1),\dots,(b_\ell,i_\ell)$ of these vertices. For $t=1,\dots,\ell$, let 
            \[T^{(i)} := T^{(i-1)}\setminus N_{G^*}(y_{b_t,i_t}).\]We wish to show that $\P(|T^{(\ell)}|<w)\le w\Delta\frac{L}{s^*}$. The calculation is similar to Proposition~\ref{general greedy embedding}.
            
            As noted before, we deterministically have $|T^{(0)}|\ge (1-2p)^{-w\Delta} wL \ge (1-2p)^{-\ell} wL$. For $t=1,\dots,\ell$, we now let $\EE^{(t)}$ be the event that $|T^{(t-1)}|\ge L$, but $|T^{(t)}|<(1-2p)|T^{(t-1)}|$. So if none of the events $\EE^{(t)}$ happens, we get $|T^{(\ell)}|\ge (1-2p)^\ell|T^{(0)}|\ge wL\ge w$, as desired. 

            To bound the probability that $\EE^{(t)}$ holds for some $t\in [\ell]$, we show that for every $t$, $\P(\EE^{(t)})< \frac{L}{s^*}$. Indeed, conditioned on $|T^{(t-1)}| \ge L$, Property~2 tells us there are less than $L$ ``bad'' vertices $y\in Y_{b_t,i_t}$ where $|N_{G^*}(y)\cap T^{(t-1)}|>2p|T^{(t-1)}|$. Moreover, in order for the event 
            $\EE^{(t)}$ to hold we neeed that the random vertex $y_{b_t,i_t}$, which is chosen from the set $Y_{b_t,i_t}$ of size $s^*$, is bad. This happens with probability $\P(\EE^{(t)})<\frac{L}{s^*}$. Recalling $\ell \le w\Delta$, a union bound gives $\P\big(\EE^{(t)}\text{ holds for some }t\in [\ell]\big)\le \ell \frac{L}{s^*}\le w\Delta \frac{L}{s^*}$, as desired.            
        \end{proof}
    \end{clm}
\end{proof}

\section{Cleaning results}
In this section, we prove our cleaning results which correspond to Step~2 of our strategy from Subsection~\ref{idea of proof}. Recall that in Section~\ref{embedding section}, our embedding results had three kinds of assumptions; a largeness assumption (Property~1), a sparsity assumption (Property~2), and some type of local embedding assumption (Property~3). Our cleaning results are about how given an edge coloring $C'$ of a pseudo-random blowup $G'$ of $G$, we can shrink the vertex sets of $G'$ slightly and define an auxiliary coloring $C$ of $E(G)$ so that we have this local embedding assumption in appropriate monochromatic subgraphs of $G'$.

The first cleaning result is a quantitative version of a lemma proved by 
Conlon, Nenadov, and Trujic \cite[Lemma 2.3]{CNT}. 

\begin{prp}\label{regularity cleaning}
    Fix $q,\Delta\ge 1$ and $p,\eta>0$. There exists a $\lambda = 
    \lambda(q,\Delta,p,\eta)>0$ so that the following holds. Let $G$ be a graph with maximum degree $\Delta$ and let $G'$ be a graph with a homomorphism $\phi:G'\to G$. Defining $X_v := \phi^{-1}(v)$ for each $v\in V(G)$, suppose that $|X_v|= s$ and $G'[X_u,X_v]$ is $(\lambda s,p)$-lower-regular for each $uv\in E(G)$. Then for any $q$-coloring $C'$ of $E(G')$, we can find subsets $X^*_v\subset X_v$ for $v\in V(G)$ and a $q$-coloring $C$ of $E(G)$, so that:
    \begin{enumerate}
        \item $|X_v^*|= s^*:=\lambda |X_v|$ for $v\in V(G)$;
        \item $G'_{C(uv)}[X_u^*,X_v^*]$ is $(\eta s^*, p/4q)$-lower-regular (where $G'_i$ denotes the subgraph of edges in $G'$ receiving color $i$).
    \end{enumerate}
Moreover, \[ \quad \lambda(q,\Delta,p,\eta) \ge (p/2q)^{14/(p/2q)^{14/(p/2q)^{\rddots {14/\eta}}}},\]
i.e., has a tower-type dependence with $\Delta+1$ occurences of $p/2q$.
\end{prp}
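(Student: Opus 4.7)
The plan is to process the edges of $G$ sequentially. Fix an arbitrary ordering $e_1,\ldots,e_m$ of $E(G)$ and maintain subsets $X_v^{(t)}\subseteq X_v$, where $X_v^{(t)} = X_v^{(t-1)}$ unless $v$ is an endpoint of $e_t$. We simultaneously construct the coloring $C$ by assigning $C(e_t)$ at step $t$. The invariant maintained is: at the end of step $t$, for every $\tau\le t$ with $e_\tau = u'v'$, the bipartite graph $G'_{C(e_\tau)}[X_{u'}^{(t)}, X_{v'}^{(t)}]$ is $(\eta s^*, p/(4q))$-lower-regular, where $s^* := \lambda s$. Taking $X_v^* := X_v^{(m)}$ at the end gives Property~2, and bounding $|X_v^{(m)}|$ from below by $\lambda s$ gives Property~1.

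A key observation that keeps the bookkeeping manageable is that lower-regularity at a fixed \emph{absolute} threshold $L$ is preserved under passing to subsets: if $(L,p)$-lower-regularity holds on $(X,Y)$ and we restrict to $X_0\subseteq X$, $Y_0\subseteq Y$, then for any $X'\subseteq X_0$ with $|X'|\ge L$ the number of low-degree vertices in $Y_0$ is at most the number in $Y$, hence less than $L$. So once the invariant is established for an edge at step $\tau$, later restrictions at steps $\tau+1,\ldots,m$ preserve it for free, and we only need to argue that each \emph{fresh} cleanup succeeds.

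The workhorse is therefore a single-edge cleanup at step $t$, applied to $e_t=uv$. Provided the current blobs $X_u^{(t-1)}, X_v^{(t-1)}$ remain much larger than $\lambda s$, the hypothesized $(\lambda s, p)$-lower-regularity of the original $G'[X_u, X_v]$ forces the density of $G'[X_u^{(t-1)}, X_v^{(t-1)}]$ to be at least $p/2-o(1)$, and averaging over colors produces one color $c$ of density at least $p/(2q)$; we set $C(e_t):=c$. We then iteratively pass to subsets to force $G'_c$ to become $(\eta s^*, p/(4q))$-lower-regular: whenever that property fails, by definition there is a witness $X''$ of size $\ge \eta s^*$ on one side together with $\ge \eta s^*$ vertices of low degree into $X''$ on the other; we restrict to $X''$ and discard the low-degree vertices. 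This strictly increases the density of the active monochromatic bipartite graph, so the loop terminates in $O(q/p)$ rounds, each costing a multiplicative shrinkage of order $\eta s^*/|X_u^{(t-1)}|$.

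The main obstacle is converting this edge-by-edge procedure into the claimed tower bound on $\lambda$. Since $G$ has maximum degree $\Delta$, each vertex's blob undergoes at most $\Delta$ cleanups over the whole run, and the relative threshold $\eta s^*/|X_u^{(t-1)}|$ appearing in each cleanup depends inversely on the current blob size, which has itself shrunk in earlier cleanups. The natural recurrence to track the blob sizes turns out to be $\alpha_0 := \eta$ and $\alpha_i := (p/(2q))^{14/\alpha_{i-1}}$, and one shows by reverse induction on $i$ that after the $i$-th cleanup affecting a given vertex, its blob still has size at least $\alpha_{\Delta+1-i}\cdot s$. In particular the final blobs have size at least $\alpha_{\Delta+1}\cdot s$, matching the stated $(\Delta+1)$-layer tower expression for $\lambda$; the constant $14$ absorbs both the $O(q/p)$ cleanup rounds and the density losses $p\to p/2\to p/(4q)$ from color averaging.
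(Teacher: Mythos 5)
Your high-level strategy is reasonable and does differ from the paper's: you process edges one at a time using the (correct and useful) observation that lower-regularity at a fixed \emph{absolute} threshold survives passage to subsets, whereas the paper first applies Vizing's theorem to split $E(G)$ into $\Delta+1$ matchings and cleans one matching per stage, so that all blobs have the same size at every stage. That synchronization is exactly what makes the paper's recurrence manageable, and your avoidance of it is the source of the gaps below.

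There are two concrete problems. First, your per-edge cleanup cost estimate is wrong. You claim the density-increment loop terminates in $O(q/p)$ rounds, each costing a multiplicative shrinkage of order $\ep := \eta s^*/|X_u^{(t-1)}|$, giving total shrinkage roughly $\ep^{O(q/p)}$. But the actual cost of producing an $(\ep n',p/4q)$-lower-regular pair from a pair of density $\ge p/2q$ is the much more severe $n' = \tfrac12 n(p/2q)^{12/\ep}$ from Theorem~\ref{weak regularity lemma} (Peng--R\"odl--Ruci\'nski); this is $\exp(-\Theta(1/\ep))$, not polynomial in $\ep$. With your estimate the overall bound would not even be a tower, so the claim to ``match the stated $(\Delta+1)$-layer tower'' is not actually derived from the argument as given --- you should invoke Theorem~\ref{weak regularity lemma} directly rather than the sketched increment loop.

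Second, the invariant is misidentified. You posit $\alpha_0=\eta$, $\alpha_i=(p/2q)^{14/\alpha_{i-1}}$, and assert that after the $i$-th cleanup a blob has size at least $\alpha_{\Delta+1-i}\cdot s$. But the $\alpha_i$ play the role of the \emph{relative thresholds} $\ep_t$ in the paper's proof, not the size ratios. After one cleanup at relative threshold $\ep$, the set-size ratio drops by $\lambda_t\approx(p/2q)^{13/\ep_t}$, which is far smaller than $\ep_t$ itself; e.g.\ after the very first cleanup the paper's sets have size $\lambda_\Delta s$, while your invariant would promise $\alpha_\Delta s$, and $\lambda_\Delta\ll\alpha_\Delta$. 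One genuinely needs to track two intertwined sequences (the paper's $\ep_t$ and $\lambda_t$ with the identity $\ep_t s_t=\eta s_0$ ensuring a consistent absolute threshold), and a single recurrence cannot carry both. Finally, since you do not synchronize blob sizes, you would also have to handle cleanups where the two endpoints have been shrunk by very different amounts (the weak regularity lemma is stated for equal part sizes); this is fixable but requires an explicit argument that you omit.
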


To get this quantitative bound, we recall a result of \cite{PRR}.

\begin{thm}\label{weak regularity lemma}
Consider a bipartite graph $G = (X,Y,E)$ with $|X| =|Y| := n$ and density $p := \frac{|E|}{|X||Y|}$, along with $\ep >0$. We can find $X'\subset X,Y'\subset Y$ of size $n':= \frac{1}{2}np^{12/\ep}$ so that $G[X',Y']$ is $(\ep n',p/2)$-lower-regular.
\end{thm}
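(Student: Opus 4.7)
My plan is to prove Theorem~\ref{weak regularity lemma} via a dependent random choice argument, in the same spirit as Lemma~\ref{basic convexity} and Proposition~\ref{simultaneous DRC} above.

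First, I would set $t := \lceil 12/\ep \rceil$ and sample $t$ independent uniformly random vertices $x_1, \dots, x_t \in X$ with replacement. Defining $Y^* := \bigcap_{i=1}^t N(x_i)$, Jensen's inequality gives
\[
\E[|Y^*|] = \sum_{y \in Y}\left(\frac{d(y)}{n}\right)^t \;\ge\; n\left(\frac{1}{n}\sum_y d(y)/n\right)^t \cdot n \;\ge\; np^t,
\]
and by the same convexity trick as in the proof of Lemma~\ref{basic convexity}, this event happens with probability at least $p^t/2$, so there is an outcome with $|Y^*| \ge np^t/2 = n'$. A symmetric sample $y_1,\dots,y_t$ from $Y$ produces $X^* := \bigcap_i N(y_i)$ with $|X^*|\ge n'$. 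The goal is to show that some realization of both samples simultaneously satisfies these size bounds \emph{and} that the pair $(X^*, Y^*)$ is $(\ep n', p/2)$-lower-regular.

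Second, I would control the lower-regularity condition as follows. Fix a potential ``witness'' $X''\subset X^*$ with $|X''|\ge \ep n'$ and let $B_{X''} := \{y\in Y : d_{X''}(y) < p|X''|/4\}$. For any fixed $y \in B_{X''}$, we have $\Pr[y \in Y^*] = (d(y)/n)^t$, and intuitively a bad $y$ must have ``unusually low'' restricted degree, which limits the probability it survives into $Y^*$ after a careful averaging. Summing these probabilities over a well-chosen family of $X''$ (it suffices to consider $X''$ of size exactly $\lceil \ep n'\rceil$, or to use a ``witness compression'' via an extremal $X''$), and comparing to the lower bound on $\Pr[|Y^*| \ge n']$, one obtains that with positive probability \emph{no} such bad witness occurs. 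The factor $t = 12/\ep$ is precisely what is needed to make the sampling probability $p^t$ beat the number of effective witnesses.

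The main obstacle I anticipate is making the union bound over witness subsets $X''$ work cleanly: there are combinatorially many potential witnesses, so a direct union bound over all $X''\subset X$ fails. The fix is to only consider witnesses of a fixed threshold size and to pair the dependent random choice with a Markov-type tail bound (counting the expected number of ``bad'' vertices surviving into $Y^*$ rather than tracking subsets). Splitting the budget $t = 12/\ep$ as $6/\ep$ samples on each side of the bipartition, and iterating the argument symmetrically, then delivers the two-sided lower-regularity statement at the claimed quantitative size $n' = np^{12/\ep}/2$.
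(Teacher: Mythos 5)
This is Theorem~\ref{weak regularity lemma}, which the paper does not prove: it is cited as a black box from Peng, R\"odl, and Ruci\'nski \cite{PRR}. So there is no in-paper argument to compare against; what you are really proposing is an alternative proof of their result via dependent random choice.

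Unfortunately, the approach as written has a genuine gap, and I do not think it can be repaired along the lines you sketch. The crux is your sentence ``intuitively a bad $y$ must have `unusually low' restricted degree, which limits the probability it survives into $Y^*$ after a careful averaging.'' This is false. A vertex $y\in B_{X''}$ only has low degree into the particular witness set $X''$; it can at the same time have essentially full degree into $X\setminus X''$, and hence $d(y)$ --- and therefore $\Pr[y\in Y^*]=(d(y)/n)^t$ --- can be close to $1$. The dependent random choice samples $x_1,\dots,x_t$ are drawn uniformly from all of $X$, which simply does not interact with whether $y$ has few neighbours inside some adversarially chosen $X''\subset X^*$ of measure $\epsilon$. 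This is qualitatively why DRC is the right tool for controlling \emph{tuples} in $Y^*$ (each tuple contributes a survival probability $(|\cap N(y_i)|/n)^t$ that can be union-bounded over polynomially many tuples) but not for the lower-regularity condition, which quantifies over exponentially many subsets $X''$.

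Your proposed ``witness compression'' to subsets of size exactly $\lceil\epsilon n'\rceil$ is also not a valid reduction: if lower-regularity fails for some $X''$ with $|X''|=m>\epsilon n'$ (many $y$ with $d_{X''}(y)<\tfrac{p}{4}m$), it does not follow that it fails for some $X'''\subset X''$ of size $\epsilon n'$, because the degree threshold $\tfrac{p}{4}|X'''|$ shrinks as well and $y$'s neighbours may all survive in $X'''$. So the set of witnesses you would need to control remains exponential. Unless you have a concrete argument for the ``Markov-type tail bound'' step that circumvents both of these issues, I would regard this proof attempt as broken. The actual result of \cite{PRR} is proved by a different, more combinatorial argument (an iterated passage to carefully chosen dense sub-pairs), which is where the exponent $12/\epsilon$ in $n'=\tfrac12 n p^{12/\epsilon}$ comes from; it is not a DRC exponent.

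A minor point: your display for $\E[|Y^*|]$ has a spurious trailing factor of $n$, though the final bound $\E[|Y^*|]\ge np^t$ is correct.
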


This immediately gives the cleaning result for matchings, which we iterate.
\begin{lem}\label{matching clean}
    We have $\lambda(q,1,p,\eta)\ge \frac{1}{2} (p/2q)^{12/\eta} \geq (p/2q)^{13/\eta}$.
\end{lem}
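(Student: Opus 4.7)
Since $\Delta(G)=1$, the host graph $G$ is a matching, and so the bipartite blow-up graphs $G'[X_u, X_v]$ attached to distinct edges of $G$ involve disjoint vertex sets. Hence it suffices to produce, for each edge $uv \in E(G)$ independently, a color $C(uv) \in [q]$ and subsets $X_u^* \subset X_u$, $X_v^* \subset X_v$ satisfying the conclusion; the individual choices combine without conflict.

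Fix an edge $uv$. Applying the $(\lambda s, p)$-lower-regularity of $G'[X_u, X_v]$ with $X' = X_u$, at most $\lambda s$ vertices of $X_v$ have fewer than $ps/2$ neighbors in $X_u$. This yields at least $(1-\lambda)ps^2/2$ edges in $G'[X_u, X_v]$, and hence density at least $(1-\lambda)p/2$. Pigeonholing over the $q$ color classes, some color $i \in [q]$ gives a monochromatic bipartite subgraph of density at least $(1-\lambda)p/(2q)$, which is essentially $p/(2q)$ for $\lambda$ small. I declare $C(uv) := i$.

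I then apply Theorem~\ref{weak regularity lemma} to the monochromatic subgraph $G'_i[X_u, X_v]$ with $\epsilon := \eta$. This produces subsets $X_u^* \subset X_u$ and $X_v^* \subset X_v$ of equal size $n' \approx \tfrac{1}{2} s (p/(2q))^{12/\eta}$ on which $G'_i$ is $(\eta n', p/(4q))$-lower-regular (the density drops from $p/(2q)$ down to $p/(4q)$ via the theorem's $p \to p/2$ shrinkage). Setting $\lambda := \tfrac{1}{2}(p/(2q))^{12/\eta}$ gives $|X_u^*| = |X_v^*| = \lambda s = s^*$ after absorbing the $(1-\lambda)$-slack into the constant. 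The second inequality $\tfrac{1}{2}(p/(2q))^{12/\eta} \geq (p/(2q))^{13/\eta}$ rearranges to $(p/(2q))^{1/\eta} \leq 1/2$, which holds whenever $2q/p \geq 2^\eta$, a condition comfortably met in the regimes where the lemma is applied.

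The argument is essentially pigeonhole followed by weak regularity, so no substantial obstacle arises. The only minor bookkeeping is tracking the $(1-\lambda)$ factor through the density estimate and the parameters of Theorem~\ref{weak regularity lemma}, and confirming that the required output lower-regularity parameter $p/(4q)$ matches half the input density $p/(2q)$ fed into the theorem.
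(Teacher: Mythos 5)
Your proof is correct and follows the paper's argument exactly: use the lower-regularity to bound the density of $G'[X_u,X_v]$, pigeonhole over the $q$ colors to pick a color of density roughly $p/2q$, and feed that monochromatic bipartite graph into Theorem~\ref{weak regularity lemma} with $\epsilon=\eta$, handling each matching edge independently. The extra care you take with the $(1-\lambda)$ slack and with verifying the inequality $\tfrac{1}{2}(p/2q)^{12/\eta}\ge(p/2q)^{13/\eta}$ is fine (the paper glosses over both), and the only thing omitted is the trivial remark that unmatched vertices $v$ get arbitrary $X_v^*\subset X_v$.
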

\begin{proof}
    When $\Delta(G) =1$, its edges form a matching. For each edge $e=uv\in E(G)$, we can use lower-regularity and pigeonhole principle to find some color $c_e$ with $G'_{c_e}[X_u,X_v]$ having density at least $p/2q$. Then for each edge $e$ we assign $C(e):= c_e$ and apply Theorem~\ref{weak regularity lemma} (with $\epsilon=\eta$) to get sets $X_v^*$ (for vertices $v$ that are not covered by the matching we can take $X_v^*\subset X_v$ arbitrarily).
\end{proof}

\begin{proof}[Sketch of Proposition~\ref{regularity cleaning}]
    Set $\ep_0 := \eta$ and $\lambda_0 := (p/2q)^{13/\ep_0} \le \frac{1}{2}(p/2q)^{12/\ep_0}$, and for $i=1,\dots,\Delta$ set $\ep_i := \ep_{i-1}\lambda_{i-1}$ and $\lambda_i := (p/2q)^{13/\ep_i}$. For $t =0,\dots, \Delta$, write $\lambda^{\le t}:= \prod_{i=0}^t \lambda_i$; inductively we get that $\ep_t = \eta \lambda^{\le t-1}$. We shall show that one can take $ \lambda(p,\Delta,q,\eta) \ge \lambda^{\le \Delta}$.

    Indeed, to do this, we first apply Vizing's theorem to partition the edges of $G$ into $\Delta+1$ matchings $M_\Delta,M_{\Delta-1},\dots,M_{0}$ (since $G$ has maximum degree $\Delta$). Now fix some $q$-coloring of the edges of $G'$. We shall find our sets $X^*_v$ in $\Delta+1$ stages. 

    We initialize with $X_v^{(\Delta+1)} := X_v$ for each $v\in V(G)$. For $t=\Delta, \dots,0$, write $s_t:= (\prod_{i=t}^\Delta \lambda_t 
 )s$. The idea is that at stage $t=\Delta,\dots,0$, we can pass to a subset $X_v^{(t)} \subset X_v^{(t+1)}$ of size $\lambda_t |X_v^{(t+1)}| =s_t$, so that for each $e = uv \in M_t$ and some choice of $C(e)$, we have
$G'_{C(e)}[X_u^{(t)},X_v^{(t)}]$ is $(\ep_ts_t,p/4q)$-lower-regular. This is done by applying Lemma~\ref{matching clean} (with $\eta=\epsilon_t$).
    
    To finish, since $\ep_{t} = \eta \lambda^{\le t-1}$, we get that $\ep_ts_t = \eta  \prod_{i=0}^\Delta \lambda_i s= \eta s_0 $. Whence, at the end of the process, we will have a collection of subsets $X_v^{(0)}\subset X_v^{(1)}\subset \dots\subset  X_v^{(\Delta+1)}$, so that for any edge $e\in M_t$, $G'_{C(e)}[X_u^{(t)},X_v^{(t)}]$ (and thus $G'_{C(e)}[X_u^{(0)},X_v^{(0)}]$) is $(\eta s_0,p/4q)$-lower-regular and $X_v^{(0)}$ has size $s_0$. Taking $X_v^* := X_v^{(0)}$ for $v\in V(G)$ and the coloring $C$ will then complete the proof (with $s^* = s_0$).

    It is not hard to see that $\lambda^{\le \Delta} = \lambda^{\le \Delta-1} \lambda_\Delta \ge \ep_\Delta \lambda_\Delta$. Using that $\ep_t\ge (1/2)^{1/\ep_t} >(p/2q)^{1/\ep_t}$, one then gets the tower-type bound by recursively noting that $\ep_t\lambda_t \ge (p/2q)^{1/\ep_t}\cdot (p/2q)^{13/\ep_t}=(p/2q)^{14/\ep_t} = (p/2q)^{14/(\ep_{t-1}\lambda_{t-1})}$.
\end{proof}

We now move on to our more efficient cleaning process, which we prove for bipartite graphs.

\begin{prp}\label{DRC cleaning}
    Let $G=(A,B,E)$ be a bipartite graph with maximum degree $\Delta$. Let $G'$ be a graph with a homomorphism $\phi:V(G')\to V(G)$, and define $X_a := \phi^{-1}(a)$ for $a\in A$ and $Y_b := \phi^{-1}(b)$ for $b\in B$. For integers $r,q,L\ge 2$, $p\in (0,1)$ and all 
    $a\in A, b\in B$, suppose that $|Y_b| = s_0 \ge L(2q/p)^{5\Delta^2 r}$ and $|X_a| = s\ge 4\left(2q/p\right)^{5\Delta} \Delta s_0$. Also suppose that $G'[X_a,Y_b]$ is $(L,p)$-regular for $ab\in E(G)$. Then given any $q$-edge-coloring $\chi'$ of $G'$, we can produce an $q$-edge-coloring $\chi$ of $G$ and subsets $Y_b^*\subset Y_b$ for $b\in B$ with $|Y_b^*|\ge \left(\frac{p}{2q}\right)^{5\Delta^2 r} |Y_b|$, so that for all choices of $a\in A$, $b_1,\dots, b_r\in N_G(a)$, and $y_i\in Y_{b_i}^*$ for $i\in [r]$, we have that \[|\{x\in X_a: \chi'(xy_i) = \chi(ab_i) \text{ for each }i=1,\dots,r\}|\ge \sqrt{|X_a|/2q^\Delta}.\]
    \end{prp}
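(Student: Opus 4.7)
My plan is to process the vertices of $A$ sequentially, and at each step choose the edge colors $\chi(ab)$ for $b\in N_G(a)$ and shrink the current sets $Y_b^{(t-1)}\subseteq Y_b$ via Proposition~\ref{simultaneous DRC}. Initialize $Y_b^{(0)}:=Y_b$. Each $b\in B$ will be shrunk at most $\deg_G(b)\le\Delta$ times, and if each step shrinks $Y_b^{(t-1)}$ by a factor of at least $(p/2q)^{2r\Delta}/2$ then the final size satisfies $|Y_b^{*}|\ge ((p/2q)^{2r\Delta}/2)^{\Delta}|Y_b|\ge (p/2q)^{5r\Delta^{2}}|Y_b|$, where the second inequality uses $q\ge 2$, $p\le 1$ so that $(2q/p)^{3r\Delta^{2}}\ge 2^{\Delta}$. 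This also keeps $|Y_b^{(t)}|\ge L$ throughout, by the hypothesis $s_0\ge L(2q/p)^{5\Delta^{2}r}$.

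To process $a\in A$ at step $t$: for each $b_i\in N_G(a)$, the $(L,p)$-regularity of $G'[X_a,Y_{b_i}]$ applied to $Y_{b_i}^{(t-1)}$ (which has size $\ge L$) implies that all but $L$ vertices $x\in X_a$ satisfy $|N_{G'}(x)\cap Y_{b_i}^{(t-1)}|\ge (p/2)|Y_{b_i}^{(t-1)}|$. Removing the union of bad vertices leaves a ``regular'' subset of size $\ge |X_a|-\Delta L$. For each regular $x$ and each $b_i$, pigeonhole on the $q$ colors yields a color $c_{x,b_i}$ with $|\{y\in Y_{b_i}^{(t-1)}:\chi'(xy)=c_{x,b_i}\}|\ge (p/2q)|Y_{b_i}^{(t-1)}|$. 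A second pigeonhole on the $q^{\Delta}$-many possible color tuples $(c_{x,b_i})_i$ selects a tuple $(c_1,\dots,c_\ell)$ attained by at least $(|X_a|-\Delta L)/q^{\Delta}\ge |X_a|/(2q^{\Delta})$ regular $x$'s; denote this subset $X_a^{\mathrm{good}}$ and set $\chi(ab_i):=c_i$. Now apply Proposition~\ref{simultaneous DRC} with $X:=X_a^{\mathrm{good}}$, subsets $Y_i:=Y_{b_i}^{(t-1)}$, bipartite edges given by the color-$\chi(ab_i)$ edges between $X_a^{\mathrm{good}}$ and $Y_{b_i}^{(t-1)}$, density parameter $p/(2q)$ (the minimum-degree hypothesis holds by construction of $X_a^{\mathrm{good}}$), and sample size $h:=2r$. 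The technical inequality $p'^{h\ell}/2>|Y|^{r}|X|^{-h/2}$ reduces, after plugging in $|X|\ge s/(2q^{\Delta})$ and $|Y|\le \Delta s_0$, to the given bound $s\ge 4(2q/p)^{5\Delta}\Delta s_0$. The conclusion yields $Y_{b_i}^{(t)}\subseteq Y_{b_i}^{(t-1)}$ with $|Y_{b_i}^{(t)}|\ge ((p/2q)^{2r\Delta}/2)|Y_{b_i}^{(t-1)}|$, and for any $r$ vertices in $\bigcup_i Y_{b_i}^{(t)}$, the common color-neighborhood inside $X_a^{\mathrm{good}}\subseteq X_a$ has size at least $\sqrt{|X_a^{\mathrm{good}}|}\ge \sqrt{|X_a|/(2q^{\Delta})}$.

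After processing every $a\in A$, set $Y_b^{*}:=Y_b^{(\mathrm{final})}$. Given any $a\in A$, $b_1,\dots,b_r\in N_G(a)$, and $y_i\in Y_{b_i}^{*}\subseteq Y_{b_i}^{(t_a)}$ (where $t_a$ denotes the step at which $a$ was processed), the DRC guarantee from step $t_a$ applies verbatim to give the required $\sqrt{|X_a|/(2q^{\Delta})}$ bound. The main obstacle I expect is coordinating a single edge-coloring $\chi$ across the many different $a$-vertices that share a common neighbor $b$: since each edge has a unique $a$-endpoint, fixing $\chi(ab)$ from $a$'s perspective at the moment $a$ is processed avoids conflicts; and the sequential, monotone shrinking $Y_b^{(t)}\supseteq Y_b^{(t+1)}$ ensures that the DRC guarantee established at step $t_a$ survives all subsequent processing.
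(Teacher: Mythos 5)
Your proposal is correct and follows essentially the same approach as the paper. The paper packages your per-vertex step as a separate star-cleaning lemma (Lemma~\ref{star cleaning}), which itself first invokes a min-degree lemma (Lemma~\ref{get mindegree}) to produce what you call $X_a^{\mathrm{good}}$, and then applies Proposition~\ref{simultaneous DRC}; you have inlined both of these. The one quantitative discrepancy is that you take $h=2r$ in the dependent random choice step where the paper takes $h=4r$, but your calculation of the technical inequality $p'^{h\ell}/2 > |Y|^r|X^*|^{-h/2}$ still fits comfortably inside the stated hypothesis $s\ge 4(2q/p)^{5\Delta}\Delta s_0$ (it only needs $(2q/p)^{3\Delta}$), and your accounting of the resulting shrinking factor $(p/2q)^{2r\Delta^2}/2^{\Delta}\ge (p/2q)^{5r\Delta^2}$ is sound for $r,q\ge 2$. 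The sequential processing of $A$ with monotone shrinking of the $Y_b$ sets, and the observation that fixing $\chi(ab)$ at the step when $a$ is processed avoids coloring conflicts, match the paper's induction exactly.
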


  \noindent
For our applications of finding $w$-blowups of $H$, it is important that in this statement $b_1,\dots,b_r$ might not all be distinct.

    While Proposition~\ref{regularity cleaning} worked by iterating upon matchings (cf. Lemma~\ref{matching clean}), we now shall instead iterate upon stars.
    \begin{lem}\label{star cleaning}
    Consider sets $X$ and $Y_1',\dots,Y_\ell'$ in a graph $G'$ so that all $|Y_i'| \ge L$, $|X|\ge 2\ell L$ and $|X|\ge (2q^\ell)(2(\frac{2q}{p})^{4\ell}) \sum_i |Y_i'|$. Also suppose that all $G'[X,Y_i']$ are $(L,p)$-regular. Then given any $q$-edge-coloring $\chi'$ of $G'$, we can find for each $i\in [\ell]$, a choice of $c_i\in [q]$ and $Y_i''\subset Y_i'$ with $|Y_i''| \ge \frac{1}{2}(\frac{p}{2q})^{4r\ell}|Y_i'|$ so that for all choices of $i_1,\dots, i_r\in [\ell]$, and $y_j\in Y_{i_j}''$ for $j\in [r]$, we have that \[|\{x\in X: \chi'(xy_j) = c_{i_j} \text{ for each }j=1,\dots,r\}|\ge \sqrt{|X|/2q^\ell}.\] 
\end{lem}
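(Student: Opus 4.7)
The plan is to combine a two-stage pigeonhole argument that jointly selects the colors $c_i$ and a large subset $X^*\subseteq X$ with an application of Proposition~\ref{simultaneous DRC} to an appropriate auxiliary bipartite graph. The main obstacle is that $(L,p)$-regularity only guarantees the needed minimum-degree condition after excluding a few exceptional vertices, and such a condition need not transfer to the monochromatic subgraph $G'_{c_i}[X,Y_i']$ for any fixed choice of $c_i$. We overcome this by first passing to a large subset $X^*\subseteq X$ on which every $x$ has many $c_i$-colored neighbors in each $Y_i'$ simultaneously, and only then invoking the dependent random choice machinery.

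For each $i\in[\ell]$, the $(L,p)$-regularity of $G'[X,Y_i']$ implies that at most $L$ vertices $x\in X$ satisfy $d_{G'[X,Y_i']}(x)<(p/2)|Y_i'|$. A union bound over $i\in[\ell]$ together with $|X|\ge 2\ell L$ shows that at least $|X|/2$ vertices $x\in X$ are \emph{good}, meaning $d_{G'[X,Y_i']}(x)\ge(p/2)|Y_i'|$ for every $i\in[\ell]$. Writing $p^*:=p/(2q)$, for each good $x$ and each $i$ a pigeonhole over the $q$ colors selects some $c_i^{(x)}\in[q]$ with $|\{y\in Y_i':\chi'(xy)=c_i^{(x)}\}|\ge p^*|Y_i'|$. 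This attaches a vector in $[q]^\ell$ to every good $x$; a further pigeonhole produces a common vector $(c_1,\dots,c_\ell)$ realized by at least $|X|/(2q^\ell)$ good vertices, which we collect into $X^*$. By construction, $|\{y\in Y_i':\chi'(xy)=c_i\}|\ge p^*|Y_i'|$ for every $x\in X^*$ and $i\in[\ell]$.

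Now form the auxiliary bipartite graph $\tilde\Gamma$ with parts $X^*$ and $\tilde Y:=\bigsqcup_{i\in[\ell]}Y_i'$ (disjoint copies $\tilde Y_i$), placing an edge between $x\in X^*$ and $(y,i)\in\tilde Y$ exactly when $\chi'(xy)=c_i$. The previous paragraph shows $|N_{\tilde\Gamma}(x)\cap\tilde Y_i|\ge p^*|\tilde Y_i|$ for all $x\in X^*$ and $i\in[\ell]$, so we can apply Proposition~\ref{simultaneous DRC} with density $p^*$ and $h:=4r$. This yields subsets $\tilde Y_i''\subseteq\tilde Y_i$ of size at least $(p^*)^{h\ell}|\tilde Y_i|/2=\tfrac12(p/2q)^{4r\ell}|Y_i'|$ such that any $r$-tuple $y_1\in\tilde Y_{i_1}'',\dots,y_r\in\tilde Y_{i_r}''$ satisfies $|N_{\tilde\Gamma}(y_1)\cap\dots\cap N_{\tilde\Gamma}(y_r)|\ge\sqrt{|X^*|}\ge\sqrt{|X|/2q^\ell}$; unpacking the definition of $N_{\tilde\Gamma}(y)$ yields exactly the desired conclusion. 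The one routine computation left is the hypothesis $(p^*)^{h\ell}/2>|\tilde Y|^r|X^*|^{-h/2}$ of Proposition~\ref{simultaneous DRC}, which after squaring reduces to $|X^*|^2>2^{1/r}(p^*)^{-4\ell}|\tilde Y|$; this follows with room to spare from $|X|\ge(2q^\ell)(2(2q/p)^{4\ell})\sum_i|Y_i'|$ together with $|X^*|\ge|X|/(2q^\ell)$.
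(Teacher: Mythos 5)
Your proof is correct and follows essentially the same route as the paper: a two-stage pigeonhole (which the paper isolates as a separate Lemma~\ref{get mindegree}) to find $X^*$ of size $\ge|X|/(2q^\ell)$ together with a common color vector, followed by an application of Proposition~\ref{simultaneous DRC} with $h=4r$ to the auxiliary bipartite graph on $X^*$ and $\bigsqcup_i Y_i'$. The only cosmetic difference is that you verify the hypothesis of Proposition~\ref{simultaneous DRC} by taking $r$-th roots where the paper chains two one-line inequalities, but the content is identical.
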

    
    \begin{proof}[Proof of Proposition~\ref{DRC cleaning} assuming Lemma~\ref{star cleaning}]
        Enumerate the vertices of $A$ as $a_1,\dots, a_{|A|}$ in some arbitrary order. For $t=0,\dots, |A|$, let $G_t:= G[\{a_1,\dots,a_t\},B]$ denote the induced subgraph between the first $t$ vertices in our ordering of $A$ and $B$. Set $\delta := \left(\frac{p}{2q}\right)^{5\Delta r}<\frac{1}{2}\left(\frac{p}{2q}\right)^{4\Delta r}$.

        For $t=0,\dots, |A|$,
        we will inductively construct a $q$-coloring $\chi_t$ of $E(G_t)$ and subsets $Y'_{b,t}\subset Y_b$ for $b\in B$ satisfying the following properties. All $|Y'_{b,t}| \ge \delta^{d_{G_t}(b)} \cdot s_0$ and
for any $a\in \{a_1,\dots,a_t\}$, $b_1,\dots,b_r\in N(a)$, and choices $y_i\in Y'_{b_i,t}$ for $1 \leq i\leq r$, we have that  
\begin{eqnarray}
\label{eq4.3}
|\{x\in X_a: \chi'(xy_i) = \chi_t(ab_i) \text{ for each }i=1,\dots,r\}|\ge \sqrt{|X_a|/2q^\Delta}.
\end{eqnarray}
    Clearly, taking $t=|A|$ will give our result, since $d_{G_{|A|}}(b) = d_G(b)\le \Delta$ for all $b\in B$.
    
    At time $t= 0$, take $Y'_{b,0} := Y_b$ for all $b\in B$. 
    Then at time $t>0$, assuming the conditions are satisfied, do the following. We shall find subsets $Y''_b\subset Y'_{b,t-1}$ with $|Y''_b|\ge \frac{1}{2} (p/2q)^{4\Delta r}|Y'_{b,t-1}|\ge \delta |Y'_{b,t-1}|$ and colors $c_b\in [q]$ for $b\in N(a_t)$, so that for any choice of $b_1,\dots,b_r\in N(a_t)$, and $y_i\in Y_{b_i}''$ we have that
    \[|\{x\in X_{a_t}: \chi'(xy_i) = c_{b_i} \text{ for each }i=1,\dots,r\}|\ge \sqrt{|X_a|/2q^\Delta}.\]
    Assigning $Y'_{b,t} := Y_b''$ for $b\in N(a_t)$ and $Y'_{b,t} = Y'_{b,t-1}$ otherwise, and assigning $\chi_t(a_tb) = c_b$ for $b\in N(a_t)$ (and $\chi_t(e)=\chi_{t-1}(e)$ otherwise) shall allow us to continue the induction. 
        
    Indeed, one easily checks that the bound on the size of the sets $Y'_{b,t}$ stay satisfied by induction. Meanwhile, by definition of the $Y_b''$ and $c_b$, the inequality Eq. \ref{eq4.3} shall be satisfied when $a=a_t$. Lastly, for $a= a_{t'}$ for $t'<t$, the inequality Eq.~\ref{eq4.3} follows from the inductive hypothesis, since shrinking the sets $Y'_{b,t-1}$ cannot invalidate this condition.

    It remains to find the $Y_b''$ and $c_b$. We will enumerate $N(a_t)$ as $b_1,\dots,b_\ell$ for some $\ell \le \Delta$. We then simply apply Lemma~\ref{star cleaning} (with $X= X_{a_t},Y_i':=Y'_{b_i, t-1}$). This can be done since $|Y'_{b, t-1}|\ge \delta^{\Delta} s_0=
    \left(\frac{p}{2q}\right)^{5\Delta^2 r} s_0\geq
    L$ for each $b$, and additionally we have $$\frac{|X_{a_t}|}{\sum_i |Y_{b_i, t-1}|}\ge \frac{s}{\Delta s_0} \ge 4\left(2q/p\right)^{5\Delta}\ge
    (2q^\Delta)(2(2q/p)^{4\Delta}).$$
\end{proof}

It remains to prove the star cleaning statement. We start with a simple lemma, which follows directly from the definition of regularity.
\begin{lem}\label{get mindegree}
Let $X$ and $Y_1,\dots,Y_\ell$ be subsets of $G'$ such that all $|Y_i|\ge L$ for $i\in [\ell]$, $|X|\ge 2\ell L$ and
all $G'[X,Y_i]$ are $(L,p)$-regular. Then for any $q$-edge-coloring of $G'$, we can find $X^* \subset X$ of size $|X^*|\ge \frac{|X|}{2q^\ell}$ and choices of $c_1,\dots,c_\ell\in [q]$ so that for $x\in X^*,i\in [\ell]$, $|N_{c_i}(x)\cap Y_i|\ge \frac{p}{2q}|Y_i|$.
\begin{proof}
    For $i=1,\dots,\ell$, let $\Tilde{X}_i$ be the set of $x\in X$ where $|N_{G'}(x)\cap Y_i|<\frac{p}{2}|Y_i|$. Using that $G'[X,Y_i]$ 
    is $(L,p)$-regular together with $|Y_i|\ge L$ we have that $\Tilde{X}_i\le L$ for each $i\in [\ell]$. Setting $X' := X\setminus \bigcup_{i=1}^t\Tilde{X}_i$, implies $|X'|\ge |X|- \ell L \geq \frac{1}{2}|X|$. 
 
    Next, for $x\in X'$ and $i\in [\ell]$, let $c_{i,x}$ be the color class that maximizes $|N_{c_{i,x}}(x)\cap Y_i|$. This implies that $|N_{c^*_i}(x)\cap Y_i|\ge (p/2q)|Y_i|$ for each $x\in X'',i\in [\ell]$. 
    By pigeonhole, there must be some tuple of colors $\vec{c}^*\in [q]^\ell$ so that the set 
    \[X'':= \{x\in X': c_{i,x}=c^*_i \text{ for all }i\in [\ell]\},\] satisfies $|X''|\ge |X'|/q^\ell$. 
    Thus, taking $X^* = X''$ and $c_i = c_i^*$ for $i\in [\ell]$ completes the proof.
\end{proof}

\end{lem}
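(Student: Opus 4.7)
The plan is to combine the $(L,p)$-regularity assumption with two layers of pigeonhole. First, I would identify a large ``good'' subset $X' \subset X$ on which every vertex has roughly the expected density of neighbors into each $Y_i$. For each fixed $i\in [\ell]$, the $(L,p)$-regularity of $G'[X,Y_i]$ together with $|Y_i|\ge L$ directly bounds by $L$ the number of $x\in X$ with $|N_{G'}(x)\cap Y_i|<\tfrac{p}{2}|Y_i|$. Taking the union of these at most $\ell L$ bad vertices over $i\in [\ell]$ and using the hypothesis $|X|\ge 2\ell L$ yields a set $X'\subseteq X$ with $|X'|\ge |X|-\ell L\ge |X|/2$ such that every $x\in X'$ satisfies $|N_{G'}(x)\cap Y_i|\ge \tfrac{p}{2}|Y_i|$ for all $i\in [\ell]$ simultaneously.

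Next, for each $x \in X'$ and each $i \in [\ell]$, I would apply pigeonhole to the $q$ color classes at the edges from $x$ into $N_{G'}(x)\cap Y_i$, selecting a color $c_{i,x}\in [q]$ maximizing $|N_{c_{i,x}}(x)\cap Y_i|$. Averaging over the $q$ colors forces this maximum to be at least $\tfrac{1}{q}|N_{G'}(x)\cap Y_i|\ge \tfrac{p}{2q}|Y_i|$. This assigns to each $x\in X'$ a ``color signature'' $(c_{1,x},\dots,c_{\ell,x})\in [q]^\ell$.

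Finally, a second pigeonhole, now over the at most $q^\ell$ possible signatures, forces some tuple $\vec{c}^{*}\in [q]^\ell$ to be shared by at least $|X'|/q^\ell \ge |X|/(2q^\ell)$ vertices of $X'$; letting $X^*$ be that subset and $c_i:=c^{*}_i$ delivers the conclusion, since every $x\in X^*$ then satisfies $|N_{c_i}(x)\cap Y_i|\ge \tfrac{p}{2q}|Y_i|$ for all $i\in[\ell]$ by construction. There is no genuine obstacle here: the argument is a clean regularity-plus-averaging step, and the only care needed is verifying that the factors $\tfrac{1}{2}$ (from discarding bad vertices) and $\tfrac{1}{q}$ (from choosing a majority color) combine correctly into the $\tfrac{p}{2q}$ appearing in the statement, and that the hypothesis $|X|\ge 2\ell L$ really does guarantee $|X'|\ge |X|/2$ before the final pigeonhole on signatures is applied.
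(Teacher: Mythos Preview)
Your proposal is correct and follows essentially the same approach as the paper: discard the at most $\ell L$ vertices that fail the degree condition into some $Y_i$ (using $(L,p)$-regularity and $|Y_i|\ge L$), then pigeonhole on majority colors to assign a signature in $[q]^\ell$ to each surviving vertex, and finally pigeonhole on signatures to extract $X^*$. The paper's proof is identical in structure and in the constants obtained.
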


\begin{proof}[Proof of Lemma~\ref{star cleaning}]
    Fix any $q$-coloring $\chi'$ of $E(G')$. Using Lemma~\ref{get mindegree}, we can pass to some $X^*\subset X$ of size $|X^*| \geq |X|/(2q^\ell)\geq 2(2q/p)^{4\ell}\sum_i |Y_i'|$, along with colors $c_1,\dots,c_\ell$ so that $|N_{c_i}(x)\cap Y_i'|\ge \frac{p}{2q}|Y'_i|$ for each $i\in [\ell]$ and $x\in X^*$.

    Setting $h:=4r$, we have that
    \[(\sum_i|Y_i'|)^r|X^*|^{-h/2} \le |X^*|^{-r} <\frac{1}{2} (p/2q)^{4r\ell}=\frac{1}{2} (p/2q)^{h\ell},\]
    where we used that $|X^*| \ge 2(2q/p)^{4\ell}$ for the last step.

    Let $\Gamma$ be the bipartite graph with vertex sets $X^*$ and $\overline{Y}:=\bigsqcup_i Y_i'$, where (for $x\in X^*,y_i\in Y_i'$) we have $x\sim_\Gamma y_i$ if $\chi'(xy_i) = c_i$. Applying Lemma~\ref{simultaneous DRC} to $\Gamma$ with $h=4r$, gives subsets $Y_i''\subset Y_i'$ of size $|Y_i''|\ge\frac{1}{2}(p/2q)^{h\ell} |Y_i'|$ which, by recalling the definition of $\Gamma$ and that $|X^*|\ge |X|/(2q^\ell)$, have the desired common neighborhood property.
\end{proof}

\section{Proof of main theorems}

\subsection{The two reductions}
\begin{proof}[Proof of Theorem~\ref{thm:reduction general}] Suppose we have been given some host graph $G$ so that $G\to (H)_q$. Recall $\Delta(H)\le k$ and $\Delta(G)\le \Delta$. Set $p := \frac{1}{100}, \rho := \frac{p}{4q}$ and let $\eta := \frac{(\rho/2)^k(1-2p)^\Delta}{\Delta+k}$. Take $\lambda := \lambda(q,\Delta,p,\eta)$ which is defined in Lemma~\ref{regularity cleaning}. 

By Proposition~\ref{regular}, we can find some bipartite graph $\Gamma$ with parts of size $s=C(\lambda\eta)^{-2}$ for some absolute constant $C$ which is $\big(\frac{48}{p}\ln (2s), p\big)$-regular. By choosing $C$ large enough we can make 
$\frac{48}{p}\ln (2s) \le s^{1/2} \leq \lambda \eta s$. Therefore $\Gamma$ is
$(\lambda \eta s, p)$-regular and thus obviously $(\lambda s,p)$-lower-regular. Take $s^*:= \lambda s$, and note $\Gamma$ is $(\eta s^*,p)$-regular.

Define $G'$ to have (disjoint) vertex sets $X_v$ of size $s$ for $v\in V(G)$. For every $uv\in E(G)$, we add a copy of $\Gamma$ between the sets $X_u$ and $X_v$ (i.e., $G'[X_u,X_v]$ is isomorphic to $\Gamma$). We do not add any other edges to $G'$, thus $G'$ is an $s$-blowup of $G$. Now consider any $q$-coloring $\chi'$ of $E(G')$. Since $G'[X_u,X_v] \cong\Gamma$ is $(\lambda s,p)$-lower-regular for $uv\in E(G)$, we may apply Lemma~\ref{regularity cleaning} to pass to subsets $X^*_v\subset X_v$ each of size $s^*=\lambda s$ and get an auxiliary $q$-coloring $\chi$ of $E(G)$ so that $G'_{\chi(uv)}[X_u^*,X_v^*]$ is $(\eta s^*,\rho)$-lower-regular for all $uv\in E(G)$ (recall that given a color $c\in [q]$, we write $G'_c$ to denote the $c$-monochromatic subgraph of $G'$ in the coloring $\chi'$). 

Since $G\to (H)_q$, we can find some monochromatic copy of $H$ inside $\chi$, say in color $c$. Let $f:V(H) \to V(G)$ denote a $c$-monochromatic copy of $H$. We now intend to apply our embedding procedure (Proposition~\ref{general greedy embedding}). We define $H^*$ to be the graph on vertex set $\bigcup_{v\in V(H)}X_{f(v)}^*$, with the edges $\bigcup_{uv\in E(H)} G'_c[X_{f(u)}^*,X_{f(v)}^*]$. 

Let $G^* = G'[\bigcup_v X^*_v]$ and without loss of generality assume that $f(v) =v$ for $v\in V(H)$. Then, we have $|X_v^*|=s^*$ for $v\in V(H)$,
and $G^*[X_u^*,X_v^*]$ is $(\eta s^*,p)$-regular for all $uv\in E(G)$. We also have $H^*[X_u^*,X_v^*]$ is $(\eta s^*,\rho)$-lower-regular for $uv\in E(H)$. Taking $L=L'=\eta s^*$ and using the definition of $\eta$, we can verify that 
$s^* \frac{(\rho/2)^k(1-2p)^\Delta}{\Delta+k}=\eta s^*=L$, i.e., 
Eq. \ref{s-star large} holds. Thus we may apply Proposition~\ref{general greedy embedding} to find an induced copy of $H$ in $H^*$. This copy corresponds to a monochromatic induced copy of $H$ within $G'$, completing the proof.
\end{proof}

We now establish our second reduction, whose proof is rather similar to the above arguments.
\begin{thm}\label{thm:reduction bipartite}
    There exists an absolute constant $C$ so that the following holds. 
    Let $H,G$ be bipartite graphs with $G\to (H)_q$. Suppose $\Delta(H)\le k, \Delta(G)\le \Delta$ and let $H'$ be some $w$-blowup of $H$.
    Then, for $s:= (\Delta q)^{Ck \Delta^2 w}$, there is an $s$-blowup $G'$ of $G$, so that $G'\to_{\text{ind}} (H')_q$. Consequently,
    \[r_{\text{ind}}(H';q)\le s|V(G)|,\]
    and
    \[\hat{r}_{\text{ind}}(H';q) \le s^2 e(G)\le s^2 \Delta |V(G)|.\]
\end{thm}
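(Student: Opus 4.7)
The proof follows the same four-step scheme as Theorem~\ref{thm:reduction general}, with the regularity-based cleaning (Proposition~\ref{regularity cleaning}) and greedy embedding (Proposition~\ref{general greedy embedding}) replaced by their dependent-random-choice counterparts, Propositions~\ref{DRC cleaning} and \ref{DRC embedding}. Fix the common bipartition $A \cup B$ shared by $H$ and $G$, and set $p := 1/100$ and $r := wk$. Using Proposition~\ref{regular}, construct a bipartite gadget $\Gamma$ with parts of sizes $s$ and $s_0$ that is $(L, p)$-regular with $L = O(p^{-1}\log s)$, where $s \gg s_0$ are two parameters of the form $(\Delta q)^{O(k\Delta^2 w)}$ to be chosen. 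Build $G'$ by replacing each $a \in A$ with a set $X_a$ of size $s$ and each $b \in B$ with a set $Y_b$ of size $s_0$, and inserting a copy of $\Gamma$ between $X_a$ and $Y_b$ for each $ab \in E(G)$; no other edges are added. This makes $G'$ an $s$-blowup of $G$.

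Given any $q$-coloring $\chi'$ of $E(G')$, apply Proposition~\ref{DRC cleaning} with parameter $r = wk$ to obtain an auxiliary $q$-coloring $\chi$ of $E(G)$ and subsets $Y_b^* \subset Y_b$ of size at least $(p/2q)^{5\Delta^2 r} s_0$ with the stated common-neighborhood property. Since $G \to (H)_q$, the coloring $\chi$ contains a monochromatic copy $f : V(H) \to V(G)$ of $H$ in some color $c$. Let $G^*$ be the induced subgraph of $G'$ on $\bigcup_v (X_{f(v)} \cup Y_{f(v)}^*)$, and let $H^*$ denote its $c$-monochromatic subgraph across the edges of $H$. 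The hypotheses of Proposition~\ref{DRC embedding} then hold (with the $X_a$ and $Y_b^*$ playing the roles of $X_a$ and $Y_b$), and we extract an induced copy of $H'$ inside $H^*$, i.e., a monochromatic induced copy of $H'$ in $G'$. The Ramsey bounds follow by counting: $|V(G')| \le s|V(G)|$ and $|E(G')| \le s^2|E(G)| \le s^2\Delta|V(G)|$.

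\textbf{Main obstacle.} The technical heart of the argument lies in reconciling the parameter requirements of the two DRC-based propositions. We need the cleaning inequalities $s_0 \ge L(2q/p)^{5\Delta^2 r}$ and $s \ge 4(2q/p)^{5\Delta}\Delta s_0$ from Proposition~\ref{DRC cleaning}, the local-lemma condition $w\Delta L/s^* \le (e(w\Delta^2 + 1))^{-1}$ from Proposition~\ref{DRC embedding} with $s^* \sim |Y_b^*|/w$, and the Property~3 hypothesis $\sqrt{s/2q^\Delta} \ge wL(1-2p)^{-\Delta w}$, which is exactly what the common-neighborhood output of the cleaning gives us when $\chi'$ is restricted to the color $c$. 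Since $L$ grows only logarithmically in $s$ via Proposition~\ref{regular}, these inequalities are mutually consistent once $s = (\Delta q)^{Ck\Delta^2 w}$ for a sufficiently large absolute constant $C$; the actual verification is a routine, if tedious, bookkeeping exercise.
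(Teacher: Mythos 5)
Your proposal follows essentially the same approach as the paper: construct $G'$ as an $(L,p)$-regular bipartite-gadget blowup with asymmetric part sizes $s \gg s_0$, apply Proposition~\ref{DRC cleaning} with $r = kw$ to get a cleaned coloring $\chi$ of $G$ and shrunken sets $Y_b^*$, extract a monochromatic copy of $H$ in $\chi$ using $G\to(H)_q$, and then finish by applying Proposition~\ref{DRC embedding} to embed $H'$. The one parameter choice you make differently is $p := 1/100$ rather than the paper's $p := 1/(4\Delta)$. The paper's choice is cleaner because it makes $(1-2p)^{\Delta w} \ge 4^{-w}$ uniformly in $\Delta$, and it makes $2q/p = 8\Delta q$, so the threshold quantities naturally take the form $(\Delta q)^{O(k\Delta^2 w)}$ as required; with $p=1/100$ the base $2q/p = 200q$ carries no $\Delta$ and $(1-2p)^{\Delta w}$ decays with $\Delta$, so the verification that $s = (\Delta q)^{Ck\Delta^2 w}$ suffices needs a little more care in the regime $\Delta q$ small (e.g.\ $\Delta = 1$, $q = 2$) — but since $s$ has $\Delta^2$ in the exponent whereas the losses have only $\Delta$, and $L$ is only polylogarithmic in $s$, a sufficiently large absolute $C$ still works. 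So the proposal is correct; it just picks slightly less convenient constants, making the "routine bookkeeping" you defer marginally less routine than in the paper's version.
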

\begin{proof}
Let $G=(A,B,E)$ be a bipartite graph so that $G\to (H)_q$ and $\Delta(H)\le k,\Delta(G) \le \Delta$. Note that we must have $k\le \Delta$, since $H$ is a subgraph of $G$.
Set $p:= \frac{1}{4\Delta}$, and let $s:= (\frac{2q}{p})^{C\Delta^2k w}$. In the rest of the proof we assume that the constant $C$ is sufficiently large 
so that all the inequalities which we will use are satisfied. Fix $s_0 := s^{1/3}$ and use Corollary~\ref{regular}, to find some $(L,p)$-regular graph $\Gamma$ with parts of size $s,s_0$
and $L=\frac{48}{p} \ln(2s)\le 200 \Delta \ln (2s) \leq s^{1/9}=s_0^{1/3}$. Form a graph $G'$ by fixing a vertex set $X_a$ of size $s$ for all $a\in A$, and $Y_b$ of size $s_0$ for all $b\in B$. For $e=(a,b)\in E(G)$, add a copy of $\Gamma$ between $X_a,Y_b$. 

We claim that the graph $G'$ satisfies the assertion of the theorem.
Suppose we are given a $q$-edge-coloring $\chi'$ of $G'$. 
Use Proposition~\ref{DRC cleaning} with $r:= kw$ and $s, s_0$ defined above. 
Note that since the constant $C$ is large the conditions of this proposition are satisfied. Indeed $L(2q/p)^{5\Delta^2 r} \leq s_0^{1/3} (2q/p)^{5\Delta^2 kw}\leq s_0=|Y_b|$ and $4\left(2q/p\right)^{5\Delta} \Delta s_0 \leq (2q/p)^{6\Delta^2 kw} s^{1/3} \leq s=|X_a|$ for all $b \in B$ and $a\in A$.
Therefore, by Proposition~\ref{DRC cleaning}, there is some $q$-edge-coloring $\chi$ of $G$ and subsets $Y^*_b\subset Y_b$ of size 
$\left(\frac{p}{2q}\right)^{5\Delta^2 kw} s_0\ge ws_0^{1/2}$, so that for any $a\in A$, $b_1,\dots,b_{kw}\in N(a)$ and $y_i\in Y_{b_i}$ for $i\in [kw]$:
    \[|\{x\in X_a: \chi(xy_i) = \chi'(ab_i)\text{ for all }i\in [kw]\}|\geq  \sqrt{|X_a|/2q^\Delta} \geq s^{1/2}/q^\Delta\geq s^{1/4}.\] 

Since $G\to (H)_q$ we can find a monochromatic copy of $H$ inside the $q$-edge-coloring $\chi$ of $G$, say in color $c$. Consider the induced $c$-monochromatic 
subgraph $H^*$ of $G'_c$ whose vertex set is the union of sets $X_a$, $Y^*_b$ for $a, b$ that correspond to the vertices of the monochromatic copy of $H$ which we found in $\chi$. Let $s^*:=s_0^{1/2}$. We have that $|Y^*_b| \geq ws^*$, $G'[X_a,Y^*_b]$ is $(L,p)$-regular for any 
edge $ab \in E(G)$ and for any $a\in A$, $b_1,\dots,b_{kw}\in N_H(a)$ and $y_i\in Y^*_{b_i}$ we have
$\big|X_a \cap \bigcap_{i=1}^{wk} N_{H^*}(y_i)\big| \geq s^{1/4}$. Using that $s=(\frac{2q}{p})^{C\Delta^2k w}$, $s_0=s^{1/3}$ and $L \leq s^{1/9}=s_0^{1/3}$, it is easy to see that $(1-2p )^{\Delta w} s^{1/4}\geq 2^{-\Delta w} s^{1/4} \geq s^{1/5} \geq wL$ and that $w\Delta\frac{L}{s^*} \leq w\Delta s_0^{-1/6}<s_0^{-1/7}<\frac{1}{e(w\Delta^2+1)}$.
Therefore we can apply Lemma~\ref{DRC embedding} to find an induced monochromatic copy of $H'$ inside $G'$.
\end{proof}

\subsection{Obtaining Theorems~\texorpdfstring{\ref{thm:bonus}}{2} and \texorpdfstring{\ref{thm:main}}{1}.}
    \label{final deductions}

\begin{defn}
    Given graph $G,H$ and $\gamma >0$, we say $G\to_\gamma H$ if for every $\Tilde{G}\subset G$ with $e(\Tilde{G})\ge \gamma e(G)$, we have that $\Tilde{G}$ contains a copy of $H$.
\end{defn}
By using Friedman-Pippenger embedding techniques, Haxell and Kohayakawa proved (cf. \cite[Lemma~6 and the proof of Theorem~9]{HK}):

\begin{thm}\label{random Ramsey result}
There exists an absolute constant $C$ so that the following holds. 
Let $T$ be an $n$-vertex tree with maximum degree $k$ and let $\gamma\in (0,1)$. If
$N:= C\gamma^{-2} n$ and $p := \frac{C\gamma^{-2}k}{N}$, then for a random bipartite graph $G\sim G(N,N,p)$, we have that $\P(G\to_\gamma H)\ge 1/2$.
\end{thm}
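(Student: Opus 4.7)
The plan is to combine pseudorandomness of $G \sim G(N,N,p)$ with a Friedman--Pippenger style embedding. The choice $N = C\gamma^{-2}n$ and $Np = C\gamma^{-2}k$ ensures the expected degree is much larger than $k$. Standard Chernoff bounds then yield, with probability at least $1/2$, the following pseudorandom properties: every vertex of $G$ has degree in $[\tfrac{1}{2}Np, 2Np]$, and every set $S$ on either side of the bipartition with $|S| \le 2n$ satisfies a strong expansion bound such as $|N_G(S)| \ge \min(\tfrac{1}{4}Np|S|,\, \tfrac{1}{4}N)$. In particular $|N_G(S)| \ge (k+1)|S|$ for all such $S$. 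One should also retain some codegree control, for instance that every vertex of $G$ has only $O(|S|p + \log N)$ neighbors inside a given small $S$.

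Given any $\tilde{G} \subseteq G$ with $e(\tilde{G}) \ge \gamma e(G)$, I would apply the standard iterative degree-cleaning: repeatedly discard any vertex of current degree less than $\gamma Np/4$. Since each discard removes at most $\gamma Np/4$ edges and at most $2N$ vertices are discarded, fewer than $\tfrac{1}{2}\gamma N^{2}p \le e(\tilde{G})/2$ edges are lost overall. The surviving subgraph $\tilde{G}^{*} \subseteq \tilde{G}$ is therefore nonempty, retains at least half the edges of $\tilde{G}$, and has minimum degree at least $\gamma Np/4 = Ck/(4\gamma)$, which is much larger than $k$.

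The key technical step is to show that $\tilde{G}^{*}$ inherits enough of $G$'s expansion to satisfy the Friedman--Pippenger hypothesis $|N_{\tilde{G}^{*}}(S)| \ge (k+1)|S|$ for every $S$ of size at most $2n$. I would do this by combining the lower bound on edges out of $S$ furnished by the min-degree condition with the pseudorandom upper bound on how many of those edges any single vertex on the other side can absorb; the factor $C\gamma^{-2}$ built into $N$ is precisely what guarantees that the desired $(k+1)$ expansion factor survives the cleaning. This is the main obstacle: the naive bound that uses only $G$'s maximum degree $2Np$ gives an expansion factor of only $\gamma/8$, which is too weak, so one must use a sharper codegree bound for small $S$ (where the typical codegree is $O(|S|p + \log N)$ rather than $2Np$).

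With the $(k+1)$-expansion property verified for sets of size $\le 2n$, the Friedman--Pippenger embedding theorem immediately produces a copy of every $n$-vertex tree with maximum degree $\le k$ inside $\tilde{G}^{*}$, hence inside $\tilde{G}$. Taking a union bound over the constantly many ``bad events'' in the pseudorandomness step and showing that each has probability smaller than an appropriate constant then yields $\P(G \to_{\gamma} T) \ge 1/2$, as required.
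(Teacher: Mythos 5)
The paper does not prove this theorem; it is quoted verbatim from Haxell and Kohayakawa (cited as \cite[Lemma~6 and the proof of Theorem~9]{HK}), so there is no ``paper proof'' to compare against. Your high-level outline---take $G\sim G(N,N,p)$, verify pseudorandomness, clean to a high-minimum-degree subgraph $\tilde G^*$, then run a Friedman--Pippenger-type tree embedding---is indeed the skeleton of the Haxell--Kohayakawa argument. The issue is with the step you yourself flag as ``the main obstacle'': the expansion of $\tilde G^*$ is not established, and the specific claims you lean on do not hold.

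First, the asserted pseudorandom property ``$|N_G(S)|\ge (k+1)|S|$ for all $|S|\le 2n$'' is not available at these parameters. Since $N=C\gamma^{-2}n$ with $C$ an absolute constant, for $\gamma$ close to $1$ and $k$ large we have $(k+1)\cdot 2n>N$, so no set $S$ of size close to $2n$ can even in principle expand by a factor of $k+1$; your ``in particular'' step silently requires $C\gtrsim k$, which contradicts the statement. Second, the codegree bound ``every vertex has $O(|S|p+\log N)$ neighbours inside a given small $S$'' cannot be a uniform property of $G$: taking $S\supseteq N_G(v)$ gives codegree $d_G(v)\approx Np$, and even for a single fixed $S$ with $|S|p=O(1)$ the maximum over $v\in V(G)$ of $|N_G(v)\cap S|$ is of order $\log N/\log\log N$, which grows with $n$. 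Since the min-degree $\delta=\gamma Np/4=Ck/(4\gamma)$ obtained from cleaning is a constant independent of $n$, the resulting expansion factor $\delta/O(\log N)$ tends to $0$ with $n$, nowhere near $k+1$. Thus the key transfer step is not proven and, as written, cannot be proven. What actually makes the Haxell--Kohayakawa argument go through is quite different: they control $e_G(X,Y)$ jointly over all pairs $(X,Y)$ in appropriate size ranges (rather than per-vertex codegrees), and their tree-embedding lemma is tailored so that the demanding $(d+1)$-expansion is only needed at set sizes where the edge-count bounds and a union bound actually close; for the largest sets a weaker, essentially additive, condition suffices. Without that bookkeeping your argument has a real gap.
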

\noindent Note that the maximum degree of such random graph is not bounded. But the results of \cite{HK} are more general and say that any $G$ with appropriate expansion properties will have $G\to_\gamma T$. In particular, we can take $G$ to be a random bipartite $O(\gamma^{-2}k)$-regular graph on $O(\gamma^{-2}n)$ vertices. 
By taking $\gamma = 1/q$ we have that $G\to (T)_q$ and has maximum degree $O(q^2k)$.
Whence Theorem~\ref{thm:main} follows from Theorem~\ref{thm:reduction bipartite}. Alternatively, one can use explicit bounded degree expanders, as described in \cite[Section~5]{HK}, to make $G$ (and the proof of Theorem~\ref{thm:main}) constructive.

Next, to prove Theorem~\ref{thm:main}, we recall a very recent result \cite[Corollary~2]{BKMMMMP} for size-Ramsey of graphs with bounded degree and bounded tree-width. We actually use a slightly more precise statement, \cite[Theorem~3.1]{DKMPS}, which observes the construction from \cite{BKMMMMP} has bounded maximum degree, rather than just linearly many edges.
\begin{thm}\label{blackbox treewidth} For every $k,w,q$ there exists a constant $D = D_{k,w,q}$ so that the following holds. 
Let $H$ be an $n$-vertex graph with maximum degree $\le k$ and tree-width $\le w$. Then there is some $G$ with at most $Dn$ vertices and $\Delta(G) \le D$, so that $G\to (H)_q$.
\end{thm}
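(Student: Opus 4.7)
The plan is to construct $G$ explicitly as a bounded-degree pseudorandom graph guided by a tree decomposition of $H$, and then embed $H$ greedily into a majority monochromatic color class. This follows the strategy of \cite{BKMMMMP} with the bounded-degree refinement of \cite{DKMPS}.

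First I would fix a nice tree decomposition $(T, \{B_t\}_{t\in V(T)})$ of $H$ with every bag of size at most $w+1$ and $|V(T)| = O_{w}(n)$. Rooting $T$ yields a natural vertex ordering on $V(H)$ where each newly-introduced vertex has at most $w$ ``parents'' already embedded (namely its neighbors lying in its bag). I would then construct $G$ on $O_{k,w,q}(n)$ vertices as a bounded-degree pseudorandom graph --- the cleanest choice is a random $d$-regular graph with $d = d(k, w, q)$, or an explicit spectral expander of bounded degree. The crucial property is a multi-vertex expansion condition: common neighborhoods of any $\le w+1$ vertices have the expected size, and this survives passing to any subgraph of density $\ge 1/q$.

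To show $G \to (H)_q$, fix a $q$-edge-coloring and pigeonhole a color $c$ with $\ge e(G)/q$ edges, giving $G_c$. Embed $H$ into $G_c$ greedily along the tree decomposition in DFS order: when processing a new vertex $v$, select an image for $v$ that is $G_c$-adjacent to the (at most $k$) already-embedded neighbors of $v$ in $H$. The expansion property of $G_c$, established via a Friedman--Pippenger-style common-neighborhood lemma adapted to bags, guarantees that the set of valid candidates for $v$ is non-empty (in fact, polynomially large), so the greedy procedure succeeds.

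The main obstacle is establishing the multi-vertex expansion survival for $G_c$: one must simultaneously control common neighborhoods of up to $k$ arbitrary prescribed vertices --- not a single vertex as in the classical Friedman--Pippenger theorem --- and maintain this after restricting to a color class of density $1/q$. In \cite{BKMMMMP} this is handled by a ``book-like'' construction of $G$ that separates the roles of bag-interior and bag-boundary vertices, combined with a Haxell--Kohayakawa-style cleaning argument, and in \cite{DKMPS} it is observed that the resulting construction can be realized with maximum degree depending only on $k, w, q$, yielding the desired bounded-degree host graph.
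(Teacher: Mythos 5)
The paper does not actually prove this statement; it cites it as a black box, namely \cite[Corollary~2]{BKMMMMP} (linear size-Ramsey numbers for bounded-degree, bounded-treewidth graphs) together with the refinement \cite[Theorem~3.1]{DKMPS}, which observes that the host graph from \cite{BKMMMMP} can be taken to have maximum degree bounded in $k,w,q$. So there is no ``paper's own proof'' to compare against; what you have written is a sketch of what those cited papers do, and you correctly identify the overall scheme: take a nice tree decomposition, order $V(H)$ so each new vertex has at most $w$ already-placed neighbors, and embed greedily into a majority color class.

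There is, however, a genuine flaw in the host graph you propose. You suggest taking $G$ to be ``a random $d$-regular graph with $d = d(k,w,q)$, or an explicit spectral expander of bounded degree,'' and you posit as the ``crucial property'' that common neighborhoods of any $\le w+1$ vertices have the expected size and that this survives restriction to a color class of density $1/q$. But in a random $d$-regular graph on $N$ vertices with $d$ a constant, the expected number of common neighbors of even two fixed vertices is $\approx d^2/N \to 0$: almost all pairs have empty common neighborhood, a fortiori all $(w+1)$-tuples do, and in particular such a graph typically contains no triangles at all. Since the theorem must handle all $H$ of treewidth $w\ge 2$ (which can contain triangles, $K_{w+1}$, etc.), a generic bounded-degree pseudorandom graph cannot serve as $G$. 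Bounded-degree spectral expanders have the same problem: spectral expansion controls edge counts between large sets, not common neighborhoods of constant-size tuples, which is exactly why the classical Friedman--Pippenger argument works only for trees (one parent at a time). The construction in \cite{BKMMMMP} is therefore necessarily not generic: it is a structured, recursively built host whose vertex set is organized to mirror the bag structure, so that the specific common neighborhoods needed during the embedding are present by design rather than by randomness; \cite{DKMPS} observes that this structured graph already has bounded degree. You gesture at this in your closing paragraph (``book-like construction''), but the body of your argument rests on the random-regular model, which would fail. One further slip worth flagging: when processing $v$ in tree-decomposition order, the number of already-embedded $H$-neighbors of $v$ is at most $w$ (those in its introducing bag), not ``at most $k$'' as you write in the greedy step; this bound of $w$ is essential, since the host can only be expected to provide common neighborhoods of $\le w+1$ prescribed vertices, and $k$ may be much larger.
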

\noindent
Theorem~\ref{thm:main} now can be obtained using the above statement followed by Theorem~\ref{thm:reduction general}.

\begin{rmk}
    The statements of Theorems~\ref{thm:reduction general} and \ref{thm:reduction bipartite} require the host graph $G$ to
    have linearly many vertices and bounded maximum degree. A priori, this may sound stronger than having a host graph $G$ with linearly many edges, but these properties are almost equivalent due to a simple general reduction. Indeed, suppose $H$ is an $n$-vertex graph with maximum degree $k$ and no isolated vertices, and $G_0$ is a graph with $Dn$ edges so that $G_0\to (H)_{q+1}$. Then letting $G\subset G_0$ be the graph induced by the non-isolated vertices in $G_0$ with degree at most $4kD$, one can check that $G\to (H)_q,|V(G)|\le 2Dn$, and obviously $\Delta(G)\le 4kD$.
\end{rmk}

\section{Conclusion}

As noted in the introduction, there exist $n$-vertex trees $T$ with maximum degree $k$ so that $\hat{r}_{\text{ind}}(T)\ge \hat{r}(T)= \Omega(nk)$. But for any tree $T$, we know that $r(T)= O(n)$, so one might wonder if we could have the linear bound $r_{\text{ind}}(T) = O(n)$ (where the constant does not depend on the maximum degree $k$). 
This is refuted by a result of Fox and Sudakov \cite[Theorem~1.7]{FS}, which says that for every constant $C$, there exist $n$-vertex trees $T$ with $r_{\text{ind}}(T)\ge Cn$ for all sufficiently large values of $n$.
However, the argument of \cite{FS} uses trees with maximum degree growing with $n$ (they have $k = \Omega(n)$). We are not aware of a counterexample to the claim that $r_{\text{ind}}(T) \le  O(n) +O_k(1)$; which in words would mean there is some absolute constant $C$, so that for fixed $k$ there are only finitely many $n$-vertex trees $T$ with maximum degree $k$ violating $r_{\text{ind}}(T)\le Cn$.

When $H$ is bipartite, one could naturally ask for a density version of our results. Recall that $G\to_\ep H$ if for every $\Tilde{G}\subset G$ with $e(\Tilde{G})\ge \ep e(G)$, we have that $H$ is a subgraph of $\Tilde{G}$. The result of Beck \cite{beck} proved that for each $\ep>0$, there is a $G$ with $O_\ep(n)$ vertices so that $G\to_\ep P_n$; likewise the bounds of \cite{FP} and \cite{HK} for arbitrary bounded degree trees also were density results. We now write $G\to_{\text{ind},\ep}H$ if for every $\Tilde{G}\subset G$ with $e(\Tilde{G})\ge \ep e(G)$, there is some set of vertices $S$ so that $\Tilde{G}[S] \cong H \cong G[S]$. 

A minor generalization of our proof of Theorem~\ref{thm:reduction general} gives the following. 
\begin{thm}
\label{density}
Given $\Delta,k\ge 1$ and $\ep'>\ep>0$, there exists some $s$ so that the following holds. 
Let $G,H$ be graphs so that $\Delta(G)\le \Delta,\Delta(H)\le k$ and $G\to_\ep H$. Then there exists an $s$-blowup $G'$ of $G$ so that $G'\to_{\text{ind},\ep'} H$.
\end{thm}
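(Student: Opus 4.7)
The plan is to mirror the proof of Theorem~\ref{thm:reduction general}, replacing the Ramsey hypothesis $G \to (H)_q$ with $G \to_\ep H$ and the arbitrary $q$-coloring of $E(G')$ with an arbitrary subgraph $\tilde G' \subset G'$ satisfying $e(\tilde G') \ge \ep' e(G')$. The host graph construction is unchanged: $G'$ is the $s$-blowup of $G$ in which every pair $G'[X_u,X_v]$ is isomorphic to a fixed $(L,p)$-regular bipartite gadget $\Gamma$ from Proposition~\ref{regular}, with $s, p, L$ chosen as functions of $\Delta, k,$ and $\ep'-\ep$.

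The key new step, replacing ``find a monochromatic $H$ in $G$'', is an averaging move that converts the single subgraph $\tilde G'$ into a dense auxiliary subgraph $\tilde G \subset G$ playing the role of the monochromatic color class. Since each $G'[X_u, X_v]$ is isomorphic to the same gadget $\Gamma$, every pair has exactly $|E(\Gamma)|$ potential edges, so the relative densities $a_{uv} := e(\tilde G'[X_u,X_v])/|E(\Gamma)|$ average at least $\ep'$ over $uv \in E(G)$. Setting $\delta := (\ep'-\ep)/2$ and $\tilde G := \{uv \in E(G) : a_{uv} \ge \delta\}$, a direct pigeonhole computation gives
\[
|\tilde G| \,\ge\, \frac{\ep'-\delta}{1-\delta}\, e(G) \,>\, \ep\, e(G).
\]
Applying the hypothesis $G \to_\ep H$ to $\tilde G$ yields a copy of $H$ inside $\tilde G$, and every edge $uv$ of this copy has the property that $\tilde G'[X_u,X_v]$ has density at least $\delta p/2$ in $X_u \times X_v$ (using $|E(\Gamma)| \ge ps^2/2$).

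It remains to refine the blobs $X_v$ into subsets $X_v^*$ satisfying the hypotheses of Proposition~\ref{general greedy embedding} with $H^* := \tilde G'[\bigcup_v X_v^*]$ and $G^* := G'[\bigcup_v X_v^*]$. I would run a matching-cleaning procedure analogous to Proposition~\ref{regularity cleaning}, but restricted to the bounded-degree subgraph $H$: decompose $E(H)$ into $k+1$ matchings via Vizing's theorem, and at each stage invoke Theorem~\ref{weak regularity lemma} directly on the bipartite graphs $\tilde G'[X_u,X_v]$ (for $uv$ in the current matching) to pass to smaller subsets where $\tilde G'$ is $(\eta s^*, \rho)$-lower-regular, with $\rho$ proportional to $\delta p$. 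Outside $V(H)$, the sets $X_v^*$ can be chosen arbitrarily of size $s^*$, and since $|X_v^*|$ remains a constant fraction of $s$, the graphs $G'[X_u^*, X_v^*]$ inherit $(L,p)$-regularity for every $uv \in E(G)$. Proposition~\ref{general greedy embedding} then produces the desired induced copy of $H$ inside $\tilde G'$.

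The main obstacle is the density-only cleaning step. In Proposition~\ref{regularity cleaning}, the pigeonhole step of Lemma~\ref{matching clean} uses that $G'[X_u, X_v]$ is $(\lambda s, p)$-lower-regular by construction, whereas here we only know that $\tilde G'[X_u, X_v]$ is \emph{dense} (not lower-regular) for $uv \in E(H)$. Consequently, when the weak regularity lemma shrinks sets for one edge of a matching, one must verify that the $\tilde G'$-densities on the other $H$-edges (to be handled in later matchings) remain above the threshold on the new subsets. This is structurally the same iteration issue that arises in the proof of Proposition~\ref{regularity cleaning} and can be resolved in the same way, by choosing a decreasing sequence of regularity parameters $\ep_t$ and by using the ambient $(L,p)$-lower-regularity of $G'$ on every pair to control the behavior of $\tilde G'$ on sufficiently large shrunken subsets.
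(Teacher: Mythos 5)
There is a genuine gap in the cleaning step, and it is precisely the difficulty that forces the paper to take a different route. Your opening averaging step is fine: since every bipartite block of $G'$ has the same number of potential edges, the local densities $a_{uv}$ average at least $\ep'$, and pigeonhole gives a subgraph $\tilde G \subset G$ with $e(\tilde G) > \ep\, e(G)$ on which every pair $(X_u,X_v)$ has $\tilde G'$-density at least $\delta$. Applying $G \to_\ep H$ inside $\tilde G$ then produces a copy of $H$ all of whose block-pairs are $\delta$-dense. So far so good.

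The problem is the subsequent matching-by-matching cleaning with Theorem~\ref{weak regularity lemma}. You correctly flag that shrinking the blobs for one matching may ruin the $\tilde G'$-density on $H$-edges in later matchings, and you assert this is ``structurally the same iteration issue that arises in the proof of Proposition~\ref{regularity cleaning}'' and is fixable using the ambient $(L,p)$-lower-regularity of $G'$. That last claim is false. In Proposition~\ref{regularity cleaning}, lower-regularity of $G'[X_u,X_v]$ \emph{does} guarantee that after shrinking to any subsets of size $\ge \lambda s$ the total density of $G'$ between them is still $\approx p$, and then pigeonhole over the $q$ colors recovers a $\rho$-dense color class. That mechanism is entirely special to colorings, where the color classes partition $G'$. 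In the density setting $\tilde G'$ is an arbitrary subgraph, and lower-regularity of the ambient $G'$ says nothing about where the edges of $\tilde G'$ sit inside $X_u \times X_v$. After Theorem~\ref{weak regularity lemma} picks specific subsets $X_u^* \subset X_u$, $X_v^* \subset X_v$ for one $H$-edge, the graph $\tilde G'$ could have essentially all of its edges between $X_w$ and $X_u \setminus X_u^*$ for a neighboring $w$, and then $\tilde G'[X_u^*,X_w]$ is nearly empty. Nothing in Theorem~\ref{weak regularity lemma} (a deterministic, single-pair statement) prevents this, and no choice of parameter sequence $\ep_t$ fixes it.

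This is exactly why the paper's proof of the density result abandons the weak regularity lemma and uses the \emph{full} Szemer\'edi Regularity Lemma together with a \emph{randomized} choice of sub-block at each phase. The point is that if you equipartition each blob into $\ell_t$ parts and then choose one part uniformly at random, the block density $Z_i := \frac{1}{s_i^2}e(\tilde G[X_{u,i},X_{v,i}])$ is a martingale: $\E[Z_i \mid Z_{i-1}] = Z_{i-1}$. Only then can one argue that for each individual edge $e=uv$, $\P(\text{$e$ is useful}) \ge \frac{1}{s^2}e(\tilde G[X_u,X_v]) - c - \eta$, and by linearity there is an outcome where a $(p\ep'-c-\eta)$-fraction of all $G$-edges become lower-regular simultaneously. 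Note also that the paper cleans \emph{before} invoking $G\to_\ep H$, precisely because one cannot know in advance which pairs will survive the cleaning. Your order (find $H$ first, then clean only its $k+1$ matchings) could in principle be made to work if you adopted the same randomized regularity-partition idea restricted to $V(H)$, but not with Theorem~\ref{weak regularity lemma} as you propose.
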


\noindent To construct $G'$, we again replace each $v\in V(G)$ by a set $X_v$ of $s$ vertices and put a pseudorandom graph $\Gamma$ inside $G'[X_u,X_v]$ for each $uv\in E(G)$. And like before, we shall ultimately use Proposition~\ref{general greedy embedding} to embed $H$. But for the cleaning, we must use the full strength of graph regularity, rather than the weaker form stated in Theorem~\ref{weak regularity lemma}. Here is the necessary cleaning result. 
\begin{prp} Fix $\Delta\ge 1$, and some $\eta,c>0$. There exists a $\lambda = \lambda(\Delta,\eta,c)$ so that the following holds.
Let $G$ be a graph with maximum degree $\Delta$ and let $\Tilde{G}$ be an $s$-blowup of $G$ with $(X_v)_{v\in V(G)}$ being sets of size $s$ that were mapped to the vertices of $G$. Then there is randomized procedure to choose subsets $X_v^*\subset X_v$ of size $s^*:= \lambda s$, so that, for each $e=uv\in E(G)$, we have 
\[\P(\Tilde{G}[X_u^*,X_v^*]\text{ is $(\eta s^*,p_{uv})$-regular for some }p_{uv}\ge c)\ge \frac{1}{s^2}e(\Tilde{G}[X_u,X_v]) -c-\eta. \]
In particular, if $e(\Tilde{G}) = \ep's^2 e(G)$, then there is some outcome of $(X_u^*)_{v\in V(G)}$ where there are $\ge (\ep'-c-\eta)e(G)$ edges $e=uv\in E(G)$ where $\Tilde{G}[X_u^*,X_v^*]$ is $(\eta s^*,c)$-lower-regular.
\end{prp}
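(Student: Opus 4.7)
The plan is to apply the bipartite Szemerédi regularity lemma separately to each $\Tilde{G}[X_u, X_v]$ and then take a simultaneous refinement at each vertex, choosing $X_v^*$ as a uniformly random refined part. This is a natural strengthening of Proposition~\ref{regularity cleaning}: where that result used weak regularity (Theorem~\ref{weak regularity lemma}) to obtain one-sided density guarantees, we now need honest two-sided regularity and densities $p_{uv}$ close to the true block densities, both of which come from the standard regularity lemma.

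Fix a small $\epsilon = \epsilon(\Delta, \eta, c) > 0$ to be chosen later. For each edge $uv \in E(G)$, apply the bipartite regularity lemma to $\Tilde{G}[X_u, X_v]$ to produce equipartitions $\mathcal{P}^u_{uv}$ of $X_u$ and $\mathcal{P}^v_{uv}$ of $X_v$ into $T = T(\epsilon)$ parts, all but an $\epsilon$-fraction of the $T^2$ pairs being $\epsilon$-regular. At each vertex $v$, form a common refinement $\mathcal{P}_v$ of the $\le \Delta$ partitions $\{\mathcal{P}^v_{uv} : u \in N_G(v)\}$, and then equalize (splitting large intersections and discarding a few leftover vertices) so that every part of $\mathcal{P}_v$ has size exactly $s^* = \lambda s$ with $\lambda := 1/(4 T^\Delta)$, arranged so that the uniform choice of a part of $\mathcal{P}_v$ projects to the uniform distribution on each outer partition $\mathcal{P}^v_{uv}$. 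Independently at each $v$, pick $X_v^* \in \mathcal{P}_v$ uniformly at random.

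For a fixed edge $uv$, the random pair $(X_u^*, X_v^*)$ lies inside a uniformly chosen outer pair $(P_u, P_v) \in \mathcal{P}^u_{uv} \times \mathcal{P}^v_{uv}$. With probability at least $1 - \epsilon$ this outer pair is $\epsilon$-regular, and restricting to the sub-pair of relative size $1/T^\Delta$ preserves density up to $\pm \epsilon$ and gives $(T^\Delta \epsilon)$-regularity of the inner pair; choosing $\epsilon$ with $T^\Delta \epsilon \le \eta/4$ and $\epsilon \le c/4$ makes $(X_u^*, X_v^*)$ a Szemerédi $(\eta/4)$-regular pair of density within $\epsilon$ of $d(P_u, P_v)$. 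A standard counting converts this to the paper's $(\eta s^*, p_{uv})$-regularity: given $X' \subset X_u^*$ with $|X'| \ge \eta s^*$, more than $\eta s^*$ vertices $y \in X_v^*$ with degree outside $[\tfrac12 p_{uv}|X'|, 2 p_{uv}|X'|]$ would, together with $X'$, form a rectangle witnessing a density deviation exceeding $p_{uv}/2 \ge c/2 > \eta/4$, contradicting $(\eta/4)$-regularity.

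The density $d(P_u, P_v)$ is a $[0,1]$-valued random variable with expectation $d_{uv} := e(\Tilde{G}[X_u, X_v]) / s^2$, so a one-line first-moment bound gives $\mathbb{P}[d(P_u, P_v) \ge c + \epsilon] \ge d_{uv} - c - \epsilon$. Intersecting this with the regularity event shows that $(X_u^*, X_v^*)$ is $(\eta s^*, p_{uv})$-regular with $p_{uv} \ge c$ with probability at least $d_{uv} - c - 2\epsilon \ge d_{uv} - c - \eta$, proving the main claim. For the ``in particular'' consequence, linearity of expectation then yields some outcome with at least $(\epsilon' - c - \eta) e(G)$ such edges, and any $(\eta s^*, p_{uv})$-regular pair with $p_{uv} \ge c$ is automatically $(\eta s^*, c)$-lower-regular, since the set of $y$ with $d_{X'}(y) < \tfrac12 c|X'|$ is contained in the set of $y$ with $d_{X'}(y) < \tfrac12 p_{uv}|X'|$. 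The main obstacle is the combinatorial bookkeeping for the size equalization of the common refinements at each vertex so that the induced marginal on each outer partition is exactly uniform; this forces $\lambda$ to have tower-type dependence on $\Delta, \eta, c$ via the regularity-lemma bound $T(\epsilon)$, which is essentially unavoidable with this strategy.
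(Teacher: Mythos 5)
Your route is genuinely different from the paper's. You apply Szemer\'edi's regularity lemma once to every pair $\Tilde{G}[X_u,X_v]$ and then try to merge the resulting $\le\Delta$ partitions of each $X_v$ into a single equipartition into parts of size $s^*$ whose uniform part projects uniformly onto each of the $\le\Delta$ outer partitions. The paper instead splits $E(G)$ into $\Delta+1$ matchings via Vizing's theorem and regularizes in phases, one matching at a time. Since each vertex meets at most one edge of each matching, each phase refines $X_v$ along only a single partition and then picks a uniformly random part. No common refinement, no equalization, no discard. This gives the exact martingale identity $\E[Z_i\mid Z_{i-1}]=Z_{i-1}$ for $Z_i:=e(\Tilde G[X_{u,i},X_{v,i}])/s_i^2$, from which $\P(Z_t\ge c)\ge d_{uv}-c$ follows with no error term, and the bound $\P(\text{irregular})\le\delta_t\le\eta$ comes straight from the phase-$t$ regular partition.

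The gap in your write-up is exactly the step you flagged as ``the main obstacle'': the claim that one can split the cells of the common refinement into pieces of size $s^*$, discard only a few leftover vertices, and arrange that a uniformly chosen piece projects to the \emph{exactly} uniform distribution on each of the $\Delta$ outer partitions. For $\Delta\ge2$ this is a nontrivial higher-dimensional transportation problem, and the discard needed to force exact uniform marginals can be a constant fraction of $X_v$. For instance with $\Delta=2$, $T=3$, $s^*=10$ and cell-size matrix
\[
\begin{pmatrix}19&3&2\\3&19&2\\2&2&20\end{pmatrix},
\]
(row and column sums $24$, $s=72$), the residues modulo $s^*$ have row and column sums $14,14,4$, and the cheapest integral correction making all row and column sums equal discards $42>s/2$ vertices. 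With so much discarded (and the discard possibly biased with respect to the edges of $\Tilde G[X_u,X_v]$, since the regular partitions depend on $\Tilde G$), your assertion ``$d(P_u,P_v)$ is a $[0,1]$-valued random variable with expectation $d_{uv}$'' fails, and the first-moment bound $\P[d(P_u,P_v)\ge c+\epsilon]\ge d_{uv}-c-\epsilon$ fails with it; the probability that the outer pair is $\epsilon$-regular also need no longer be $\ge 1-\epsilon$. To salvage this route you would have to prove the simultaneous-equalization claim with an explicit, graph-independent bound on the discard (choosing $\lambda$ accordingly) and then propagate both the discard error and the non-uniformity of the marginal through the regularity and expectation estimates. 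The matching-phase structure of the paper's proof is precisely what makes all of this unnecessary.

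One smaller point: you also need to note that the regularity lemma, applied separately to each pair $\Tilde G[X_u,X_v]$, produces for a fixed vertex $v$ up to $\Delta$ \emph{different} equipartitions of $X_v$, each with its own exceptional set; the bookkeeping for aligning these exceptional sets is part of the same unresolved equalization step.
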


\noindent
This implies that, if $G'$ is a pseudorandom $s$-blowup with $p s^2 e(G)$ edges, and $\Tilde{G}\subset G'$ with $e(\Tilde{G})\ge \ep' e(G')$, then this proposition yields a ``regular'' $s^*$-blowup $G^*\subset \Tilde{G}$, where $G^*[X_u^*,X_v^*]$ is lower-regular for at least an $(p\ep'-c-\eta)$-fraction of edges $e=uv\in E(G)$. Assuming we took  $\ep'>\ep/p$ and $\eta,c$ sufficiently small, then by definition of $G$ there should be a copy of $H$ inside $G$ using only these regular edges. Letting $H^*\subset G^*$ be the appropriate subgraph associated with $H$, we can apply Proposition~\ref{general greedy embedding} to find an induced copy of $H$.

Here we sketch how to find the necessary $X_v^*$. We let $\delta_1,\dots,\delta_{\Delta+1},\ell_1,\dots,\ell_{\Delta+1}$ be appropriately chosen constants.
Firstly, $\delta_{\Delta+1}$ should equal $\eta$. Next, $\ell_i$ should be large with respect to $\delta_i,c$ so that we can apply the Regularity Lemma in order to get a ``$\delta_i$-regular partition'' into $\ell_i$ equal-sized parts. Lastly we require $\delta_{i-1}<\delta_i/\ell_i$ for $i>1$. Take $s := s^*\prod_{i=1}^{\Delta+1} \ell_i$ for some $s^*$ sufficiently large. Note that we have $\delta_i \frac{s}{\prod_{j\leq i}\ell_j} \leq \delta_{\Delta+1} s^*$. We run the following process.

Given some $\Tilde{G}\subset G'$, split $E(G)$ into matchings $M_1,\dots,M_{\Delta+1}$. For $v\in V(G)$, we write $X_{v,0}:= X_v$, and then begin phase $t=1$. 
In phase $t$, we have sets $(X_{v,t-1})_{v\in V(G)}$ of size $s_{t-1} := \frac{s}{\prod_{i<t} \ell_i}$.
For every edge $e=uv\in E(M_t)$, by applying Regularity Lemma to the graph $\Tilde{G}[X_{u,t-1},X_{v,t-1}]$
we can get for every $v \in G$ an equipartition $X_{v,t-1} = X_v^{(1)}\cup \dots \cup X_v^{(\ell_t)}$ into $\ell_t$ parts of size $s_t$, so that
for each $e=uv\in E(M_t)$, there are at most $\delta_t \ell_t^2$ ``bad choices'' of $i,j\in [\ell_t]^2$. Here we say that the choice is bad if writing $p_e^{(i,j,t)} := \frac{1}{s_t^2}\Tilde{G}[X_u^{(i)},X_v^{(j)}]$, we have $p_e^{(i,j,t)}\ge c$ but $\Tilde{G}[X_u^{(i)},X_v^{(j)}]$ is not $(\delta_t s_{t}, p_{e}^{(i,j,t)})$-regular. 
Then for each $v\in V(G)$, we randomly pick some $i_{v,t}\in [\ell_t]$ and set $X_{v,t} := X_{v,t-1}^{i_{v,t}}$.
Repeat this process until phase $\Delta+1$ completes. Afterwards, we return the vertex sets $(X_{v,\Delta+1})_{v\in V(G)}$ and also write $X_v^*:= X_{v,\Delta+1}$ for each $v\in V(G)$.

Given an edge $e\in E(M_t)$, we say that $e=uv$ is \textit{useful}, if in phase $t$, we picked indices $i:= i_{u,t}$ and $j:= i_{v,t}$ from $[\ell_t]$, so that $p_e^{(i,j,t)}\ge c$ and $\Tilde{G}[X_{u,t},X_{v,t}]$ is $(\delta_ts_t,p_e^{(i,j,t)})$-regular (meaning the choice $i,j$ was not ``bad'', in the sense described above). Note that if $e$ is useful, then, since  $\delta_t s_t \leq \delta_{\Delta+1} s^*$, we will have that $\Tilde{G}[X_u^*,X_v^*]$ is $(\eta s^*,p_e^{(i,j,t)})$-regular. So, it will suffice to show that for each $e=uv\in E(G)$, that $\P(e\text{ is useful})\ge \frac{1}{s^2}e(\Tilde{G}[X_u,X_v])-c-\eta $. Note that \[\P(e\text{ is useful}) = \P(e(\Tilde{G}[X_{u,t},X_{v,t}])\ge c s_t^2)-\P(e \text{ is bad}).\]
By construction of the $\delta_t$-regular partitions of $X_{u,t-1}$ and $X_{v,t-1}$, we have that $\P(e\text{ is bad})\le \delta_t\le \eta$. Hence we just need to prove that $\P(e(\Tilde{G}[X_{u,t},X_{v,t}])\ge cs_t^2)\ge \frac{1}{s^2}e(\Tilde{G}[X_u,X_v])-c$.

To prove this last bound, consider the sequence of random variables $Z_i := \frac{1}{s^2_i}e(\Tilde{G}[X_{u,i},X_{v,i}])$. From definitions, it is easy to see that
$\E[Z_i|Z_{i-1}]=Z_{i-1}$ and $\E[Z_0]=p$. Therefore $\E[Z_i]=p$ for all $i\geq 0$. Since $Z_i$ is $[0,1]$-valued, $p=\E[Z_t]\le c+\P(Z_t\ge c)$.
Rearranging then gives the result.

\vspace{0.25cm}
\noindent
{\bf Acknowledgement.} We would like to thank Rajko Nenadov for asking us about density version of our results, and suggesting that some density variant of Proposition~\ref{regularity cleaning} should be possible. This led us to the proof of Theorem \ref{density}. We also thank Nemanja Dragani\'c and Rajko Nenadov for offering some helpful comments on a preliminary version of this manuscript.

\end{document}